\def\csname opt@stmaryrd.sty\endcsname{only,shortleftarrow,shortrightarrow}
\tikzset{>=stealth}
\tikzset{rot90/.style={anchor=south, rotate=90, inner sep=.5mm}}
\def\section{\@startsection{section}{1}%
    \z@{.7\linespacing\@plus\linespacing}{.5\linespacing}%
    {\normalfont\large\scshape\centering}}
\numberwithin{equation}{section}
\newtheorem{theorem}[equation]{Theorem}
\newtheorem{corollary}[equation]{Corollary}
\newtheorem{lemma}[equation]{Lemma}
\newtheoremstyle{named}{}{}{\itshape}{}{\bfseries}{.}{.5em}{\thmname{#3} \thmnumber{#2}}
\theoremstyle{named}
\theoremstyle{definition}
\theoremstyle{remark}
\newtheorem{remark}[equation]{Remark}
\NewDocumentCommand{\dotimes}{t_}{\IfBooleanTF{#1}{\otimeop}{\otimes}}
\NewDocumentCommand{\otimeop}{m}{\mathbin{\mathop{\otimes}\displaylimits_{#1}}}
\NewDocumentCommand{\lotimes}{t_}{\IfBooleanTF{#1}{\lotimeop}{\otimes}}
\NewDocumentCommand{\lotimeop}{m}{\mathbin{\mathop{\overset{\mathrm{L}}{\otimes}}\displaylimits_{#1}}}
\NewDocumentCommand{\Rtimes}{t_}{\IfBooleanTF{#1}{\rtimeop}{\times}}
\NewDocumentCommand{\rtimeop}{m}{\mathbin{\mathop{\overset{\mathrm{R}}{\times}}\displaylimits_{#1}}}
\newcommand{\xmapsfrom}[2][]{%
    \ext@arrow3095\leftarrowfill@{#1}{#2}\mapsfromchar}
\newcounter{sarrow}
\newextarrow{\xtwoarrows}{{20}{20}{20}{20}}
{\bigRelbar\bigRelbar{\bigtwoarrowsleft\rightarrow\rightarrow}}
\DeclareMathOperator{\Sym}{Sym}
\DeclareMathOperator{\Spec}{Spec}
\DeclareMathOperator{\HH}{HH}  
\DeclareMathOperator{\HN}{HN}  
\DeclareMathOperator{\dR}{dR}  
\newcommand{\colim@}[2]{%
  \vtop{\m@th\ialign{##\cr
    \hfil$#1\operator@font colim$\hfil\cr
    \noalign{\nointerlineskip\kern1.5\ex@}#2\cr
    \noalign{\nointerlineskip\kern-\ex@}\cr}}%
}
\newcommand{\colim}{%
  \mathop{\mathpalette\colim@{\rightarrowfill@\textstyle}}\nmlimits@
}
\renewcommand{\varprojlim}{%
  \mathop{\mathpalette\varlim@{\leftarrowfill@\scriptscriptstyle}}\nmlimits@
}
\renewcommand{\varinjlim}{%
  \mathop{\mathpalette\varlim@{\rightarrowfill@\scriptscriptstyle}}\nmlimits@
}
\newcommand{\BC}{\mathbb{C}}
\newcommand{\BH}{\mathbb{H}}
\newcommand{\BL}{\mathbb{L}}
\newcommand{\BP}{\mathbb{P}}
\newcommand{\cA}{\mathcal{A}}
\newcommand{\bF}{\mathbf{F}}
\newcommand{\cF}{\mathcal{F}}
\newcommand{\cG}{\mathcal{G}}
\newcommand{\cI}{\mathcal{I}}
\newcommand{\cK}{\mathcal{K}}
\newcommand{\cO}{\mathcal{O}}
\newcommand{\rR}{\mathrm{R}}
\newcommand{\cT}{\mathcal{T}}
\newcommand{\BLb}{\mathbb{L}^{\bullet}}
\title{Hodge to de Rham degeneration of nodal curves}
\author{Yunfan He}
\email{he\_yunfan@pku.edu.cn}
\begin{document}
\begin{abstract}
We compute the Hochschild and negative cyclic homology of the nodal curves, and we show that the (noncommutative) Hodge to de Rham spectral sequence degenerates at the second page. We also classify all the Hochschild classes that can be lifted to negative cyclic homology. In particular, the result in the nodal cubic curve case is important for computation of categorical enumerative invariants.
\end{abstract}
\maketitle

\section{Introduction}
The study of algebraic de Rham cohomology originates with Grothendieck. In \cite{Grothendieck}, he proved that for a smooth scheme \( X \) over \(\BC\), the hypercohomology
\[
H^{\bullet}_{\mathrm{dR}}(X) := \BH^{\bullet}(X, \Omega_X^{\bullet})
\]
of the complex of sheaves of differentials \(\Omega_X^{\bullet}\) computes the singular cohomology of the analytification of \( X \). This complex, now known as the \textit{algebraic de Rham complex}, admits the trivial filtration, which induces a spectral sequence
\[
\prescript{1}{}E^{p,q} = H^p(X, \Omega_X^q) \Longrightarrow H^{p+q}_{\mathrm{dR}}(X),
\]
called the \textit{Hodge-to-de Rham (HdR) spectral sequence}. Grothendieck further showed that if \( X \) is smooth over \(\BC\), this spectral sequence degenerates at the first page. Deligne and Illusie \cite{DeligneIllusie87} later generalized this result to smooth proper schemes over any field \( k \) of characteristic zero via reduction to positive characteristic.

The groups appearing in the HdR spectral sequence are linked to Hochschild homology and negative cyclic homology, which allows generalizing the HdR spectral sequence to noncommutative geometry. The resulting spectral sequence  
\[
\HH_{\bullet}(X)[[u]] \Longrightarrow \HN_{\bullet}(X),
\]  
called the \textit{Hochschild-to-cyclic spectral sequence}, further extends to any \(\cA_{\infty}\)-algebra \(A\). Kontsevich and Soibelman \cite{KontsevichSoibelman09} conjectured that for any smooth and proper \(\cA_{\infty}\)-algebra \(A\) over a field of characteristic \(0\), this spectral sequence degenerates at the first page. Kaledin \cite{Kaledin08,Kaledin17} later proved this conjecture.  

In this paper, we study the case when \(X\) is singular. Specifically, we focus on projective curves of genus \(g\) with \(n\) nodal singularities, which form boundary components of the moduli space \(\overline{M}_{g,n}\). Our main theorem is 
\begin{theorem}
The Hodge to de Rham spectral sequence of any projective curve with nodal singularities degenerates at $\prescript{2}{}E$.
\end{theorem}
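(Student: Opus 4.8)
The plan is to re‑express this spectral sequence in de Rham terms and then exploit that $X$ is ``short.'' Recall that for a scheme over $\BC$ one has the Hochschild--Kostant--Rosenberg decomposition $\HH_n(X)=\bigoplus_{p\ge 0}\BH^{p-n}\!\left(X,\Exterior^p\BL_{X}\right)$ (with $\Exterior^p$ the \emph{derived} exterior powers of the cotangent complex) and that under it the Connes operator $B$ is the de Rham differential; accordingly the noncommutative Hodge‑to‑de Rham spectral sequence $\HH_\bullet(X)[[u]]\Rightarrow\HN_\bullet(X)$ is built from the Hodge‑to‑de Rham spectral sequence
\[
E_1^{p,q}=\BH^{q}\!\left(X,\Exterior^p\BL_{X}\right)\ \Longrightarrow\ H^{p+q}_{\dR}(X)
\]
of the derived de Rham complex, and degenerates on the same page, so it suffices to study the latter. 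The first input, which I take from the computations of $\HH$ and $\HN$ carried out earlier in the paper, is the shape of $\Exterior^p\BL_X$ for a nodal curve: $\Exterior^0\BL_X=\cO_X$; $\Exterior^1\BL_X=\Omega^1_X$ placed in cohomological degree $0$ (nodes are hypersurface singularities, so $H_1(\BL_X)=0$); and for $p\ge 2$ the complex $\Exterior^p\BL_X$ is supported on the finite singular locus $S$, its only non‑vanishing cohomology sheaves being $\mathcal H^{2-p}\cong\mathcal H^{1-p}\cong\bigoplus_{s\in S}\BC$.

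The decisive observation is that, since $X$ has dimension $1$ while $\Exterior^{\ge 2}\BL_X$ is supported in dimension $0$, every non‑zero term $E_1^{p,q}$ has total degree $p+q\in\{0,1,2\}$. Hence no differential $d_r$ $(r\ge 1)$ can be non‑zero except from total degree $1$ to total degree $2$: the unique class in total degree $0$, coming from $H^0(X,\cO_X)$, is forced to survive to $E_\infty$ to produce $H^0_{\dR}(X)$, and total degree $2$ cannot emit a differential because total degree $3$ is empty. Thus, after discarding total degree $0$, the whole spectral sequence is a single two‑step complex, and I only have to understand $d_1$ and the residual $d_{\ge 2}$ between total degrees $1$ and $2$.

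I would split the analysis of $d_1$ into a ``geometric'' part (columns $p=0,1$) and a ``singular'' part (columns $p\ge 2$, concentrated at the nodes), both handled through the normalization $\nu\colon\widetilde X\to X$. Geometric: (a) $d_1\colon H^1(X,\cO_X)\to H^1(X,\Omega^1_X)$ vanishes — the torsion of $\Omega^1_X$ is a skyscraper, so $H^1(X,\Omega^1_X)\xrightarrow{\ \sim\ }H^1(\widetilde X,\Omega^1_{\widetilde X})$, and the square relating $d\colon\cO_X\to\Omega^1_X$ to $d\colon\cO_{\widetilde X}\to\Omega^1_{\widetilde X}$ shows this map factors through the corresponding one on the smooth proper curve $\widetilde X$, which is zero by Hodge theory; (b) $d_1\colon H^0(X,\Omega^1_X)\to\BH^0(X,\Exterior^2\BL_X)\cong\BC^{|S|}$ is surjective with kernel $H^0(\widetilde X,\Omega^1_{\widetilde X})$ — already its restriction to the torsion subsheaf is an isomorphism onto the target, since locally the torsion differential $y\,dx$ goes to $d(y\,dx)=dy\wedge dx$, a generator of $\mathcal H^0(\Exterior^2\BL_{\BC[x,y]/(xy)})\cong\BC$. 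Singular: everything is local at a node, i.e.\ a statement about $R=\BC[x,y]/(xy)$; one computes $\widehat{\dR}_{R/\BC}$ — whose cohomology is $\BC$ in degree $0$ since $\Spec R$ is contractible — together with all its de Rham differentials, finding $\mathrm{Fil}^{\ge 2}\widehat{\dR}_{R/\BC}\simeq(\Omega^1_R/dR)[-2]\simeq\BC[-2]$ and, crucially, that the maps $\mathcal H^{1-i}(\Exterior^i\BL_R)\to\mathcal H^{1-i}(\Exterior^{i+1}\BL_R)$ induced by $d$ (each a homomorphism $\BC\to\BC$) are isomorphisms for every $i\ge 2$. Globalizing — the sheaves $\Exterior^{\ge 2}\BL_X$ are assembled from these local models — all columns $p\ge 2$ vanish on $E_2$: the $\mathcal H^{1-i}\to\mathcal H^{1-i}$ isomorphisms annihilate them, and the one remaining higher term $E_2^{2,0}=\coker\!\left(d_1\colon H^0(X,\Omega^1_X)\to\BH^0(X,\Exterior^2\BL_X)\right)$ is $0$ by (b).

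Granting all this, the $E_2$‑page is concentrated in bidegrees $(0,0),(0,1),(1,0),(1,1)$, equal to $H^0(X,\cO_X)$, $H^1(X,\cO_X)$, $H^0(\widetilde X,\Omega^1_{\widetilde X})$, $H^1(\widetilde X,\Omega^1_{\widetilde X})$ respectively — equivalently the $E_1$‑page $H^q(X,\underline\Omega^p_X)$ of the Du Bois complexes of $X$ — with $E_2^{2,0}=0$; the only higher differential compatible with these bidegrees is $d_2\colon E_2^{0,1}\to E_2^{2,0}=0$, which is therefore zero, so $d_{\ge 2}=0$ and the spectral sequence degenerates at the second page. The step I expect to be the genuine obstacle is precisely the local computation at the node: pinning down the derived de Rham complex of $\BC[x,y]/(xy)$ with all of its de Rham differentials and verifying that the comparison maps between the successive torsion pieces $\mathcal H^{\bullet}(\Exterior^i\BL_R)$ are isomorphisms rather than zero — this is exactly what forces the collapse to happen at $E_2$ and not at some later page. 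Everything else (the globalization and the bookkeeping with the exact sequences $0\to T\to\Omega^1_X\to\nu_*\Omega^1_{\widetilde X}\to 0$ and $0\to\cO_X\to\nu_*\cO_{\widetilde X}\oplus\cO_S\to\nu_*\cO_{\nu^{-1}(S)}\to 0$) is routine.
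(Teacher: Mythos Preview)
Your proposal is correct and follows essentially the same four-step structure as the paper: $\alpha=0$ from convergence at total degree $0$; surjectivity of $\gamma$ and the isomorphisms $\beta_k$ from a local computation at the node (the paper carries out the $\beta_k$ step via an explicit dg resolution of $\BC[x,y]/(x^3+x^2-y^2)$, but your choice of $\BC[x,y]/(xy)$ is formally equivalent and is in fact what the paper uses for $\gamma$); and $\sigma=0$ via comparison with the normalization. The one substantive variation is in the $\sigma$-step: in the general case the paper concludes $\sigma=0$ by arguing that $H^1(L_Y)\cong\BC$ is the sole term on the $p+q=2$ diagonal of $E_2$ and must survive to match $H^2_{\mathrm{sing}}(Y)=\BC$, whereas you instead use Hodge degeneration on the smooth proper curve $\widetilde X$ to kill the bottom row of the comparison square directly---this is slightly cleaner since it avoids appealing to the abutment, and your identification of the resulting $E_2$-page with the Du Bois $E_1$-page is a pleasant observation not made explicit in the paper.
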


In the smooth projective case, the Hochschild-to-cyclic spectral sequence degenerates at $\prescript{1}{}E$, implying that every Hochschild homology class lifts to negative cyclic homology; equivalently, the map $\HN_n \to \HH_n$ is surjective for all $n$. This property fails for singular $X$. However, by explicitly computing $\HH_*(X)$ and $\HN_*(X)$, we prove that the Hochschild-to-cyclic spectral sequence for nodal curves degenerates at $\prescript{2}{}E$. This allows us to classify liftable Hochschild classes and analyze the map $\HN_*(X) \to \HH_*(X)$.  

The liftability of Hochschild classes, particularly the class in $\HH_{-1}(X)$ for a projective nodal cubic curve $X$, plays a key role in computing categorical enumerative invariants \cite{Caldararu-Tu_24}. We elaborate on this in Section \ref{Liftable classes}.  

Sections \ref{Hochschild homology} through \ref{Negative cyclic homology} focus on the case $(g,n) = (1,1)$, corresponding to a projective nodal cubic curve $X$. Section \ref{Hochschild homology} computes the Hochschild homology $\HH_*(X)$, while Section \ref{Hodge to de Rham spectral sequence} examines the degeneration of the HdR spectral sequence for $X$. Section \ref{Negative cyclic homology} determines the negative cyclic homology $\HN_*(X)$. In Section \ref{General cases}, we generalize these computations to arbitrary genus $g$ curves $Y$ with $n$ singularities. Section \ref{Liftable classes} characterizes all liftable Hochschild classes of $X$. Finally, Section \ref{Appendix: cuspidal curve} presents analogous results for cuspidal curves.

\subsection*{Acknowledgements} We would like to thank Andrei C\u{a}ld\u{a}raru, Benjamin Antieau, and Dima Arinkin for helping out at various stages of this project. The author was partially supported by the National Science Foundation through grants number DMS-2152088 and DMS-2202365.

\section{Hochschild homology}\label{Hochschild homology}
Let $X\subset \BP^2$ be the nodal cubic curve. Explicitly, $X$ is cut out by the equation $y^2z=x^3-x^2z$. In this section we will compute $\HH_*(X)$.

Let $L_X$ denote the cotangent sheaf of $X$. 

\begin{lemma}
There are two descriptions of \( L_X \).
\begin{enumerate}
    \item \( L_X \) admits a resolution 
    \[
    0 \to \cO_X(-3) \to \Omega_{\mathbb{P}^2}^1\vert_X \to L_X \to 0,
    \]
    \item \( L_X \) fits into a nontrivial extension 
    \[
    0 \to \cO_P \to L_X \to \cI_P \to 0,
    \]
    where \( P = [0:0:1] \) is the node of \( X \), and \( \cI_P \) is the ideal sheaf of \( P \).
\end{enumerate}
\end{lemma}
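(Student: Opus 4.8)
The plan is to obtain (1) from the conormal sequence of the hypersurface $X\subset\BP^2$ together with a torsion-freeness observation, and to obtain (2) by comparing $L_X$ with the dualizing sheaf $\omega_X$. First, for \emph{Part (1)}: write $X=V(f)$ with $f=y^2z-x^3+x^2z$. The conormal sequence of the closed immersion $X\hookrightarrow\BP^2$ reads
\[
\cI_X/\cI_X^2\xrightarrow{\,g\,\mapsto\,dg\,}\Omega^1_{\BP^2}|_X\to L_X\to 0,
\]
and since $\cI_X=\cO_{\BP^2}(-3)\cdot f$ is invertible we have $\cI_X/\cI_X^2\cong\cO_X(-3)$. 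So it remains only to see the left map is injective. Here $X$ is integral (the cubic $y^2z-x^3+x^2z$ is irreducible, hence cuts out a reduced and irreducible hypersurface) and is smooth away from the node $P=[0:0:1]$; over the smooth locus the conormal sequence of a smooth hypersurface is exact on the left, so the kernel of $g\mapsto dg$ is supported at $P$, hence is a torsion subsheaf of the line bundle $\cO_X(-3)$. A line bundle on an integral scheme is torsion-free, so the kernel vanishes, which is (1).

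For \emph{Part (2)}: as a plane curve, $X$ is a local complete intersection, so $\omega_X$ is invertible, and adjunction gives $\omega_X\cong\omega_{\BP^2}(3)|_X\cong\cO_X$. There is a canonical map $c\colon L_X\to\omega_X$ that is an isomorphism over $X\setminus\{P\}$, so $\ker c$ and $\coker c$ are supported at $P$. I would then run the local computation at the node: after completing, $\widehat{\cO}_{X,P}\cong k[[u,v]]/(uv)$, the module $\omega$ is free of rank one on the Rosenlicht generator $\eta=\tfrac{du}{u}=-\tfrac{dv}{v}$, and the canonical map sends $du\mapsto u\eta$ and $dv\mapsto -v\eta$. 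Hence $\im c$ is locally the ideal $(u,v)\cdot\eta$, while $\ker c$ is the torsion subsheaf of $L_{X,P}$, which is spanned over $k$ by the class of $v\,du$ and is therefore one-dimensional. Globalizing, $\ker c\cong\cO_P$, and $\im c$ is a subsheaf of $\omega_X\cong\cO_X$ equal to $\cO_X$ away from $P$ and to $\fm_P$ at $P$, hence $\im c\cong\cI_P$. This produces $0\to\cO_P\to L_X\to\cI_P\to 0$. (Alternatively one could dissect $\coker(g\mapsto dg)$ from Part (1) directly in the affine chart $z=1$, but the route through $\omega_X\cong\cO_X$ is cleaner.) For nontriviality: a splitting would force $L_{X,P}\cong\cO_P\oplus\cI_{P,P}$, a module needing $1+\dim_k(\fm_P/\fm_P^2)=3$ generators over $\cO_{X,P}$, whereas $L_{X,P}$ is generated by the two elements $du,dv$; this contradiction shows the extension is nontrivial.

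The step I expect to be the main obstacle is the local analysis at the node in Part (2): pinning down the canonical map $L_X\to\omega_X$ precisely in local coordinates, and correctly identifying its image with $\fm_P$ (not $\fm_P^2$, and with no unit twist that would fail to glue globally) and its kernel with $\cO_P$. The remaining steps are formal.
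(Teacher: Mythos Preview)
Your argument is correct. Part (1) matches the paper's approach (conormal sequence for an lci), with your added justification of injectivity via torsion-freeness being a harmless elaboration.

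For Part (2) you take a genuinely different route from the paper. The paper uses the bare torsion/torsion-free decomposition $0\to\cT\to L_X\to\cG\to 0$: a local computation at the node gives $\cT\cong\cO_P$ and $\cG$ locally $\cong\cI_P$, so $\cG\cong\cI_P\otimes\cK$ for some line bundle $\cK$; then the paper pins down $\cK\cong\cO_X$ by computing $\chi(X,L_X)=0$ from Part (1). Your approach instead produces the map $c\colon L_X\to\omega_X$ and uses adjunction $\omega_X\cong\cO_X$ to identify the torsion-free quotient as $\cI_P$ directly, with no Euler-characteristic bookkeeping. Both arguments do the same local work at the node; the difference is in how the global twist is nailed down. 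Your route is arguably more conceptual (the dualizing sheaf is the natural target), while the paper's route is more elementary (no need to invoke Rosenlicht differentials or the class map). Your concern about a ``unit twist that would fail to glue'' is not an issue: the image of $c$ is an honest $\cO_X$-submodule of $\omega_X$, and any isomorphism $\omega_X\cong\cO_X$ carries it to an ideal sheaf equal to $\cO_X$ off $P$ and to $\fm_P$ at $P$, hence to $\cI_P$. Finally, you supply a clean nontriviality argument via generator counts ($L_{X,P}$ needs $2$ generators, $\cO_P\oplus\fm_P$ needs $3$), which the paper asserts in the statement but does not actually prove.
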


\begin{proof}
The first statement follows from the fact that \( X \subset \mathbb{P}^2 \) is a local complete intersection, with conormal sheaf  
\[
\cI_X/\cI_X^2 \cong \cO_{\mathbb{P}^2}(-3)\vert_X \cong \cO_X(-3).
\]

For the second part, consider a coherent sheaf \( \cF \). Every such sheaf fits into a short exact sequence
\[
0 \to \cT \to \cF \to \cG \to 0,
\]
where \( \cT \) is the torsion subsheaf of \( \cF \) and \( \cG \) is torsion-free. Applying this to \( \cF = L_X \), a local calculation shows that \( \cT \cong \cO_P \) and \( \cG \) is locally isomorphic to \( \cI_P \). Thus, \( \cG \cong \cI_P \otimes \cK \) for some line bundle \( \cK \) on \( X \). Using part (1), we compute the Euler characteristic \( \chi(X, L_X) = 0 \), which implies \( \cK \cong \cO_X \). Hence, \( \cG \cong \cI_P \), and the sequence becomes
\[
0 \to \cO_P \to L_X \to \cI_P \to 0. \qedhere
\]
\end{proof}

\begin{remark}
We call the two-term complex \( \cO_X(-3) \to \Omega_{\mathbb{P}^2}^1 \vert_X \), concentrated in degrees \([-1, 0]\), the \textit{cotangent complex} of \( X \) and denote it by \( \BLb_X \). Since \( X \) is a complete intersection, this description agrees with Illusie’s definition of the cotangent complex.
\end{remark}

With these two descriptions of \( L_X \), we obtain:

\begin{corollary}
The cohomology of \( L_X \) is given by
\[
H^i(X, L_X) = \begin{cases} 
\mathbb{C} & i = 0, 1, \\
0 & \text{otherwise}.
\end{cases}
\]
\end{corollary}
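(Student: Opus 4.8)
The plan is to read off the cohomology directly from the second description of $L_X$. Take the short exact sequence
\[
0 \to \cO_P \to L_X \to \cI_P \to 0
\]
from part (2) of the Lemma and pass to the long exact sequence in cohomology; since $X$ is a projective curve, all cohomology vanishes in degrees $\ge 2$ by Grothendieck vanishing, so only $H^0$ and $H^1$ are in play. The sheaf $\cO_P$ is a skyscraper at the node, so $H^0(X,\cO_P) = \BC$ and $H^i(X,\cO_P) = 0$ for $i \ge 1$; it therefore remains to compute $H^{\bullet}(X,\cI_P)$.

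For that I would use the structure sequence $0 \to \cI_P \to \cO_X \to \cO_P \to 0$. The map $H^0(X,\cO_X) \to H^0(X,\cO_P)$ is evaluation of a global function at $P$, an isomorphism $\BC \to \BC$, so $H^0(X,\cI_P) = 0$; and since $X$ is a plane cubic its arithmetic genus is $1$, so $H^1(X,\cO_X) = \BC$, and as $H^1(X,\cO_P) = 0$ the long exact sequence gives $H^1(X,\cI_P) \cong H^1(X,\cO_X) = \BC$. Feeding $H^{\bullet}(X,\cO_P)$ and $H^{\bullet}(X,\cI_P)$ into the long exact sequence of the first displayed extension yields
\[
0 \to \BC \to H^0(X,L_X) \to 0 \qquad \text{and} \qquad 0 \to H^1(X,L_X) \to \BC \to 0,
\]
whence $H^0(X,L_X) \cong H^1(X,L_X) \cong \BC$ and all other cohomology vanishes, as claimed. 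Note that nontriviality of the extension in part (2) plays no role here; the long exact sequence argument is insensitive to the extension class.

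As an independent check one can instead work with the resolution $0 \to \cO_X(-3) \to \Omega^1_{\BP^2}\vert_X \to L_X \to 0$ of part (1): the cohomology of $\cO_X(-3)$ comes from twisting $0 \to \cO_{\BP^2}(-3) \to \cO_{\BP^2} \to \cO_X \to 0$ by $\cO_{\BP^2}(-3)$ (using that multiplication by the defining cubic is injective on $H^0(\BP^2,\cO_{\BP^2})$, hence surjective on $H^2$ between the twists $-6$ and $-3$ by Serre duality), the cohomology of $\Omega^1_{\BP^2}\vert_X$ comes from restricting the Euler sequence to $X$ together with the cohomology of $\cO_X(-1)$ and $\cO_X(-2)$, and the long exact sequence of the resolution reproduces the same answer; in particular it confirms $\chi(X,L_X) = 0$. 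The argument is essentially bookkeeping; the one thing to watch is which connecting maps vanish (they all do, owing to $H^0(X,\cI_P) = 0$ and $H^1(X,\cO_P) = 0$), so I do not foresee a genuine obstacle.
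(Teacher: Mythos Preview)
Your proof is correct and follows essentially the same approach as the paper: both use the long exact sequence in cohomology coming from the second description $0 \to \cO_P \to L_X \to \cI_P \to 0$. The paper's version is terser, invoking the already-known $\chi(X,L_X)=0$ from part (1) rather than computing $H^\bullet(X,\cI_P)$ explicitly via the structure sequence as you do, but this is a difference of exposition rather than of method; your added independent check via the resolution in part (1) is a nice bonus not present in the paper.
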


\begin{proof}
We have already computed \( \chi(X, L_X) = 0 \). Combining this with the long exact sequence in cohomology induced by the second description of \( L_X \), the result follows.
\end{proof}

\begin{lemma}\label{wedge2}
The hypercohomology of \(\bigwedge^2 \BLb_X\) is given by
\[
\mathbb{H}^i(X, \bigwedge^2 \BLb_X) = \begin{cases} 
\mathbb{C} & i = 0, -1, \\
0 & \text{otherwise}.
\end{cases}
\]
(Here, \(\bigwedge^2\) denotes the derived exterior product.)
\end{lemma}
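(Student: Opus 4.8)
The plan is to compute $\bigwedge^2 \BLb_X$ explicitly using the locally free resolution $\BLb_X = [\cO_X(-3) \xrightarrow{d} \Omega^1_{\BP^2}\vert_X]$ in degrees $[-1,0]$, and then take hypercohomology via a spectral sequence. Recall that for a two-term complex of locally free sheaves $E^{-1} \xrightarrow{d} E^0$ with $E^{-1}$ of rank $1$ and $E^0$ of rank $2$, the derived second exterior power is represented by the total complex of
\[
\Sym^2 E^{-1} \longrightarrow E^{-1}\otimes E^0 \longrightarrow \Exterior^2 E^0,
\]
placed in degrees $[-2,-1,0]$. In our case this is
\[
\cO_X(-6) \longrightarrow \cO_X(-3)\otimes \Omega^1_{\BP^2}\vert_X \longrightarrow \Exterior^2 \Omega^1_{\BP^2}\vert_X \cong \cO_X(-3),
\]
where the last isomorphism uses $\Exterior^2 \Omega^1_{\BP^2} \cong \omega_{\BP^2} \cong \cO_{\BP^2}(-3)$. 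So $\bigwedge^2 \BLb_X$ is a three-term complex of line bundles on $X$ concentrated in degrees $[-2,-1,0]$.

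First I would record the cohomology of each term. On the nodal cubic $X$ (arithmetic genus $1$, $\omega_X \cong \cO_X$, so $\cO_X(-3)$ has degree $-9$, etc.) the line bundles $\cO_X(-3)$ and $\cO_X(-6)$ have negative degree, hence $H^0 = 0$; and by Serre duality $H^1(X,\cO_X(-d)) \cong H^0(X,\cO_X(d))^\vee$ which for $d=3,6$ has dimension $\chi(\cO_X(d)) = 3d$ (since $\chi(\cO_X) = 0$ on a genus-$1$ curve). For the middle term $\cO_X(-3)\otimes \Omega^1_{\BP^2}\vert_X$ I would use the Euler sequence $0 \to \Omega^1_{\BP^2} \to \cO_{\BP^2}(-1)^{\oplus 3} \to \cO_{\BP^2} \to 0$ restricted to $X$ and twisted by $\cO_X(-3)$ to compute its cohomology. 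Then I would run the hypercohomology spectral sequence $E_1^{p,q} = H^q(X, (\bigwedge^2\BLb_X)^p) \Rightarrow \BH^{p+q}$ and identify the differentials. Because $X$ is a curve, only $q = 0,1$ contribute, so the $E_1$ page has two nonzero rows and the spectral sequence degenerates at $E_2$; the content is to show the $d_1$ differentials have exactly the right ranks so that only $\BH^0$ and $\BH^{-1}$ survive, each one-dimensional.

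The main obstacle is pinning down the ranks of the $d_1$ differentials on the $E_1$ page — equivalently, understanding the maps on cohomology induced by $\Sym^2 d$ and the wedge map. An alternative, and probably cleaner, route is to avoid the spectral sequence bookkeeping by relating $\bigwedge^2 \BLb_X$ to $\omega_X$: since $X$ is a local complete intersection of dimension $1$, the top derived exterior power of $\BLb_X$ is the dualizing complex shifted, i.e.\ $\bigwedge^2 \BLb_X \simeq \omega_X[1] \cong \cO_X[1]$ — here the $\Exterior^2$ of a rank-$1$-shifted-by-rank-$2$ "virtual bundle of rank $1$" is the determinant $\det \BLb_X = \Exterior^2 E^0 \otimes (E^{-1})^\vee \cong \cO_X(-3)\otimes\cO_X(3) \cong \cO_X$, placed in degree $-1$. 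Thus $\BH^i(X, \bigwedge^2\BLb_X) \cong H^{i+1}(X,\cO_X)$, which is $\BC$ for $i = -1, 0$ and $0$ otherwise, exactly as claimed. I would present this determinant/dualizing-complex computation as the main argument and, if desired, remark that it is consistent with the direct resolution computation. The only subtlety to check carefully is the grading/shift convention in the identification $\bigwedge^2[\text{rank }1 \to \text{rank }2] \simeq (\det)[1]$, which follows from the Koszul-type description above since the two outer terms $\Sym^2 E^{-1}$ and $\Exterior^2 E^0$ have the same line-bundle class twisted against $E^{-1}\otimes E^0$ and the complex is exact off the locus where $d$ drops rank — here $X$ is lci so $d$ is injective as a map of sheaves and the cokernel is $L_X$, giving the clean answer.
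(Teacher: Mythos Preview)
Your main argument --- the claim that $\bigwedge^2 \BLb_X \simeq \omega_X[1] \cong \cO_X[1]$ --- is false, and the reasoning you give for it is confused. The determinant of the perfect complex $[E^{-1}\to E^0]$ is indeed the line bundle $\Exterior^2 E^0 \otimes (E^{-1})^\vee \cong \cO_X$, but the determinant is $\bigwedge^{\mathrm{virt.rk}}\BLb_X = \bigwedge^{1}\BLb_X$, not $\bigwedge^{2}$. For a genuine line bundle $L$ one has $\bigwedge^2 L = 0$; the derived $\bigwedge^2\BLb_X$ measures exactly the failure of $\BLb_X$ to be a line bundle, hence is supported on the locus where $d:E^{-1}\to E^0$ drops rank. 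You even say this yourself (``exact off the locus where $d$ drops rank''), but then draw the opposite conclusion: injectivity of $d$ as a sheaf map does not prevent $d$ from vanishing as a bundle map at the node, and it does vanish there. Concretely, the cohomology sheaves of the three--term complex are $\cH^{-1}\cong\cH^{0}\cong\cO_P$, skyscrapers at the node, so $\bigwedge^2\BLb_X$ is supported at $P$ and cannot be quasi--isomorphic to $\cO_X[1]$. That the hypercohomology of $\cO_X[1]$ happens to match the correct answer is a coincidence of arithmetic genus $1$; your argument would equally ``prove'' $\BH^i(\bigwedge^2\BLb_Y)\cong H^{i+1}(\cO_Y)$ for any nodal curve $Y$, which is wrong as soon as $g\neq n$ (compare the paper's Section~5). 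This error also undermines everything downstream: the paper repeatedly uses that $\bigwedge^k\BLb_X$ is supported at $P$ for $k\ge 2$.

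Your first route is the right one, and the paper essentially carries it out, but with a simplification that removes the bookkeeping you were worried about: rather than computing global cohomology of each term and chasing $d_1$, the paper twists the complex by $\cO_X(3)$ to get $\cO_X(-3)\to\Omega^1_{\BP^2}\vert_X\to\cO_X$, then passes to the affine chart $R=\BC[x,y]/(x^3+x^2-y^2)$ and computes the cohomology \emph{sheaves} of $R\xrightarrow{(f_x,f_y)}R^2\xrightarrow{(-f_y,f_x)}R$ directly. One finds $\cH^0\cong R/(f_x,f_y)\cong\BC$ and $\cH^{-1}\cong\BC$ with an explicit generator annihilated by $x$ and $y$, so both are $\cO_P$; since skyscrapers are unaffected by the twist, the hypercohomology follows immediately. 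Computing cohomology sheaves locally first, rather than running a global $E_1$ spectral sequence, is what makes the argument clean.
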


\begin{proof}
A direct calculation shows that
\[
\bigwedge^2 \BLb_X \coloneqq \Sym^2(\BLb_X[1])[-2] \simeq 
\left(0 \to \cO_X(-6) \to \Omega_{\mathbb{P}^2}^1\vert_X(-3) \to \cO_X(-3) \to 0\right)[0],
\]
which further simplifies to
\[
\cO_X(-3) \otimes \left(0 \to \cO_X(-3) \to \Omega_{\mathbb{P}^2}^1\vert_X \to \cO_X \to 0\right)[0].
\]

Restricting to the affine open subset \(\mathcal{D}(z) \cap X\), the short exact sequence 
\[
0 \to \cO_X(-3) \to \Omega_{\mathbb{P}^2}^1\vert_X \to \cO_X \to 0
\]
becomes
\begin{equation*}\label{ses}
\begin{tikzcd}[row sep=small]
0 \ar[r] & R \ar[r, hook] & R^{\oplus 2} \ar[r, two heads] & R \ar[r] & 0 \\
& 1 \ar[r, mapsto] & \begin{pmatrix} 3x^2 + 2x \\ -2y \end{pmatrix}, \\
&& \begin{pmatrix} f \\ g \end{pmatrix} \ar[r, mapsto] & 2yf + (3x^2 + 2x)g,
\end{tikzcd}
\end{equation*}
where \( R = \mathbb{C}[x, y]/(x^3 + x^2 - y^2) \). Direct computation reveals:
\begin{itemize}
    \item \( H^0 = \mathbb{C} \), generated by \( 1 \),
    \item \( H^{-1} = \mathbb{C} \), generated by \( \begin{pmatrix} (3x + 2)y \\ 2(x^2 + x) \end{pmatrix} \).
\end{itemize}
The generator of \( H^{-1} \) is annihilated by \( x \) and \( y \), implying the \( -1 \)-cohomology sheaf is supported at the node \( P \). Thus, the sequence is quasi-isomorphic to 
\[
0 \to 0 \to \cO_P \to \cO_P \to 0.
\]
Twisting by \( \cO_X(-3) \) preserves hypercohomology, yielding the claimed result.
\end{proof}

With Lemma \ref{wedge2}, the hypercohomology of higher exterior powers of \(\BLb_X\) can be computed analogously.

\begin{corollary}\label{hypercohomology}
The hypercohomology of \(\bigwedge^k \BLb_X\) is given by
\[
\mathbb{H}^i\left(X, \bigwedge^k \BLb_X\right) = \begin{cases} 
\mathbb{C} & i = -k + 2, -k + 1, \\
0 & \text{otherwise}.
\end{cases}
\]
\end{corollary}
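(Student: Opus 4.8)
The plan is to imitate the proof of Lemma \ref{wedge2}, proceeding by induction on $k$. The starting point is the observation that $\Sym$-functors turn the cotangent complex $\BLb_X$, which in degrees $[-1,0]$ is the two-term complex $\cO_X(-3)\to\Omega^1_{\BP^2}\vert_X$, into an explicit finite complex. Concretely, $\bigwedge^k\BLb_X\coloneqq\Sym^k(\BLb_X[1])[-k]$ is represented by the Koszul-type complex
\[
0\to\cO_X(-3k)\to\Omega^1_{\BP^2}\vert_X(-3k+3)\to\cdots\to\Sym^{k-1}(\Omega^1_{\BP^2}\vert_X)(-3)\to\Sym^k(\Omega^1_{\BP^2}\vert_X)\to 0,
\]
placed in cohomological degrees $-k$ through $0$, where the differentials are induced by the map $\cO_X(-3)\to\Omega^1_{\BP^2}\vert_X$ classifying the cubic. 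Pulling out the twist $\cO_X(-3k+3)$ as in the case $k=2$, I would reduce to understanding a fixed complex independent of the twist data, and then argue that this complex is quasi-isomorphic to a two-term complex $\cO_P\to\cO_P$ supported at the node (in the appropriate degrees). The key input is that away from $P$ the curve is smooth, so the restriction of $\bigwedge^k\BLb_X$ to $X\setminus\{P\}$ is just $\Omega^k_{X\setminus\{P\}}$, which vanishes for $k\ge 2$ since a smooth curve has no nonzero $k$-forms for $k\ge 2$; hence all the cohomology sheaves of $\bigwedge^k\BLb_X$ in degrees $\ne 0$ are supported at $P$, and the degree-$0$ cohomology sheaf is the torsion-free part, which for $k\ge 2$ must also be torsion (again by the smooth locus computation), hence supported at $P$.

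The cleanest route is probably a local computation at the node together with a global Euler-characteristic count, exactly parallelling the $k=2$ case. Locally at $P$ the ring is $R=\BC[x,y]/(x^3+x^2-y^2)$ (or rather its completion/localization), and one computes the homology of the Koszul-type complex built from the Jacobian data $(3x^2+2x,\,-2y)$ directly. One shows that the only surviving local cohomology is a single copy of the residue field $\BC$ in two consecutive degrees. For the global statement, I would use Corollary \ref{hypercohomology}'s predecessor pattern: the hypercohomology spectral sequence $H^p(X,\cH^q(\bigwedge^k\BLb_X))\Rightarrow\BH^{p+q}$ collapses because the nonzero cohomology sheaves are skyscrapers at $P$, so $H^p$ vanishes for $p>0$; thus $\BH^i(X,\bigwedge^k\BLb_X)=H^0(X,\cH^i(\bigwedge^k\BLb_X))$, which is $\BC$ in the two relevant degrees and $0$ otherwise. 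Matching degrees: the complex lives in $[-k,0]$ and the computation in the $k=2$ case suggests the surviving part sits in degrees $-k+1$ and $-k+2$, consistent with the smooth-locus vanishing of $\Omega^k$ for $k\ge 2$.

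Alternatively — and this may be the slicker inductive mechanism — one can use the short exact sequence of complexes relating $\Sym^k$, $\Sym^{k-1}$, and lower symmetric powers of $\BLb_X[1]$, i.e. the filtration whose graded pieces involve $\bigwedge^{k-1}$ twisted by the generator $\cO_X(-3)[1]$ and a correction term. This would let me deduce the cohomology of $\bigwedge^k\BLb_X$ from that of $\bigwedge^{k-1}\BLb_X$ together with Lemma \ref{wedge2} as the base case, via a long exact sequence in hypercohomology. The main obstacle I anticipate is bookkeeping: keeping track of the precise degrees and twists so that the long exact sequence (or the spectral sequence) has the connecting maps one expects, and verifying that the relevant map on $H^0$ or $H^{-k+2}$ is an isomorphism rather than zero — the same subtlety that in Lemma \ref{wedge2} forced the explicit identification of the generator of $H^{-1}$ and the check that it is annihilated by $x$ and $y$. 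Once the base case is pinned down and the inductive step's connecting homomorphism is shown to vanish (or be an isomorphism) in the right spots, the result follows by a routine dimension count.
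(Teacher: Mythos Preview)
Your explicit model for $\bigwedge^k \BLb_X$ is wrong, and fixing it collapses the whole argument to one line.  You write a length-$(k+1)$ Koszul-type complex with terms $\Sym^{k-j}(\Omega^1_{\BP^2}\vert_X)(-3j)$, but recall that $\bigwedge^k\BLb_X=\Sym^k(\BLb_X[1])[-k]$, and after the shift $[1]$ the bundle $\Omega^1_{\BP^2}\vert_X$ sits in \emph{odd} degree while the line bundle $\cO_X(-3)$ sits in \emph{even} degree.  Hence the graded pieces of $\Sym^k(\BLb_X[1])$ involve $\bigwedge^j(\Omega^1_{\BP^2}\vert_X)\otimes\Sym^{k-j}(\cO_X(-3))$, and since $\Omega^1_{\BP^2}\vert_X$ has rank~$2$ the exterior powers vanish for $j\ge 3$.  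Consequently $\bigwedge^k\BLb_X$ is, for every $k\ge 2$, a \emph{three}-term complex
\[
\bigl(\cO_X(-3k)\;\to\;\Omega^1_{\BP^2}\vert_X(-3k+3)\;\to\;\cO_X(-3k+3)\bigr)[k-2],
\]
not the long complex you wrote down.

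Once this is in hand the paper's proof is immediate and none of your proposed machinery is needed.  The three-term complex factors as $\cO_X(-3k+3)\otimes\bigl(\cO_X(-3)\to\Omega^1_{\BP^2}\vert_X\to\cO_X\bigr)[k-2]$, and Lemma~\ref{wedge2} already identified the bracketed complex with the skyscraper complex $\cO_P\to\cO_P$.  Tensoring a skyscraper at $P$ by any line bundle changes nothing, so $\bigwedge^k\BLb_X\simeq(\cO_P\to\cO_P)[k-2]$ and the hypercohomology is read off.  Your observations that the cohomology sheaves are supported at $P$ and that the hypercohomology spectral sequence collapses are correct, but they are consequences of this identification rather than an independent route to it; the local recomputation, the Euler-characteristic count, and the inductive filtration argument you sketch are all unnecessary.
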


\begin{proof}
A direct calculation yields:
\begin{align*}
\bigwedge^k \BLb_X &\simeq \left(0 \to \cO_X(-3k) \to \Omega_{\mathbb{P}^2}^1\vert_X(-3k + 3) \to \cO_X(-3k + 3) \to 0\right)[k - 2] \\
&\simeq \cO_X(-3k + 3) \otimes \left(0 \to \cO_X(-3) \to \Omega_{\mathbb{P}^2}^1\vert_X \to \cO_X \to 0\right)[k - 2] \\
&\simeq \cO_X(-3k + 3) \otimes \left(0 \to 0 \to \cO_P \to \cO_P \to 0\right)[k - 2].
\end{align*}
The result follows by repeating the argument from Lemma \ref{wedge2}.
\end{proof}

Combining these calculations, we obtain:

\begin{theorem}
The Hochschild homology of the projective nodal cubic curve \(X\) is given by
\[
\HH_n(X) = \begin{cases} 
\mathbb{C} & \text{if } n = -1, \\
\mathbb{C}^2 & \text{if } n = 0, \\
\mathbb{C} & \text{if } n \geq 1, \\
0 & \text{otherwise}.
\end{cases}
\]
\end{theorem}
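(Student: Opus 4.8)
The plan is to deduce the formula from the Hochschild--Kostant--Rosenberg (HKR) decomposition, feeding in the hypercohomology computations already made. Since $X$ is a projective local complete intersection over $\BC$, a field of characteristic $0$, I would use the functorial equivalence $\Delta^{*}\Delta_{*}\cO_X\simeq\bigoplus_{k\ge 0}\bigl(\bigwedge^{k}\BLb_X\bigr)[k]$ in the derived category of $X$ (equivalently, $\cO_{\cL X}\simeq\Sym^{\bullet}(\BLb_X[1])$ for the derived loop space $\cL X$), in which the splitting of the HKR filtration is precisely where characteristic $0$ enters. Taking hypercohomology over $X$ then yields
\[
\HH_n(X)\;\cong\;\bigoplus_{k\ge 0}\mathbb{H}^{k-n}\bigl(X,\ \bigwedge^{k}\BLb_X\bigr),
\]
and since $X$ is lci the summands $\bigwedge^{k}\BLb_X$ are exactly the perfect complexes treated in Corollary \ref{hypercohomology}. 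After this reduction only bookkeeping remains.

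For $k\ge 1$, Corollary \ref{hypercohomology} gives $\mathbb{H}^{i}\bigl(X,\bigwedge^{k}\BLb_X\bigr)=\BC$ for $i\in\{-k+1,\,-k+2\}$ and $0$ otherwise; for $k=1$ this is compatible with $\BLb_X\simeq L_X$ and the computation $H^{0}(X,L_X)=H^{1}(X,L_X)=\BC$ obtained above. For $k=0$ I would use $\bigwedge^{0}\BLb_X=\cO_X$: since $X$ is a plane cubic, hence a projective curve of arithmetic genus $1$, the sequence $0\to\cO_{\mathbb{P}^2}(-3)\to\cO_{\mathbb{P}^2}\to\cO_X\to 0$ (together with Serre duality on $\mathbb{P}^2$) gives $H^{0}(X,\cO_X)=H^{1}(X,\cO_X)=\BC$ and $H^{i}(X,\cO_X)=0$ otherwise.

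Next I would collect the contributions to $\bigoplus_{k\ge 0}\mathbb{H}^{k-n}(X,\bigwedge^{k}\BLb_X)$ degree by degree. The $k=0$ term contributes a $\BC$ exactly when $-n\in\{0,1\}$, that is, at $n=-1$ and $n=0$. For $k\ge 1$ the $k$-th term contributes a $\BC$ exactly when $k-n\in\{-k+1,-k+2\}$, that is, at the two degrees $n=2k-2$ and $n=2k-1$; as $k$ runs over $1,2,3,\dots$ the pairs $\{2k-2,\,2k-1\}$ partition the non-negative integers (a given $n\ge 0$ is hit only by $k=\lfloor n/2\rfloor+1$). Summing up: at $n=-1$ only $k=0$ survives, giving $\BC$; at $n=0$ both $k=0$ and $k=1$ survive, giving $\BC^{2}$; at every $n\ge 1$ exactly one term survives, giving $\BC$; and nothing survives for $n\le -2$. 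This is the asserted answer.

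The only delicate point, and the one I would want to state carefully rather than re-prove, is the first step: that the Hochschild homology of (a dg enhancement of) $\operatorname{Perf}(X)$ for the \emph{singular} scheme $X$ is computed by the global HKR complex $R\Gamma\bigl(X,\ \bigoplus_{k\ge 0}(\bigwedge^{k}\BLb_X)[k]\bigr)$; this is the characteristic-$0$ HKR theorem for quasi-compact quasi-separated (here, projective lci) schemes, and it is what licenses replacing $\HH_{\bullet}(X)$ by derived differential forms. Everything afterward — the resolutions of $\bigwedge^{k}\BLb_X$ from Lemma \ref{wedge2} and Corollary \ref{hypercohomology}, and the degree count — is routine.
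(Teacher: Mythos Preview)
Your proposal is correct and follows essentially the same approach as the paper: invoke the HKR/Buchweitz--Flenner decomposition $\mathsf{HH}_X\simeq\Sym^{\bullet}(\BLb_X[1])=\bigoplus_k(\bigwedge^k\BLb_X)[k]$, take hypercohomology, and plug in Corollary~\ref{hypercohomology} together with $H^0(X,\cO_X)=H^1(X,\cO_X)=\BC$. Your write-up is in fact more explicit than the paper's in spelling out the degree bookkeeping and in flagging that the HKR splitting for the singular (lci) scheme $X$ is the one nontrivial external input.
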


\begin{proof}
The proof relies on a decomposition of the Hochschild complex from \cite{Buchweitz-Flenner}:
\[
\mathsf{HH}_{X/Y} \simeq \Sym^\bullet(\BLb_{X/Y}[1]) \cong \bigoplus_{n} \left(\bigwedge^n \BLb_{X/Y}\right)[n].
\]
For \(\mathsf{HH}_X := \mathsf{HH}_{X/k} = \Sym^\bullet(\BLb_X[1])\), taking hypercohomology gives
\[
\HH_{-*}(X) := \mathbb{R}^{*}\Gamma\left(\Sym^\bullet_{\cO_X}(\BLb_X[1])\right).
\]
The result follows from the preceding calculations and the fact that \(H^0(X, \cO_X) = H^1(X, \cO_X) = \mathbb{C}\).
\end{proof}

\section{Hodge to de Rham spectral sequence}\label{Hodge to de Rham spectral sequence}
Recall that for a smooth projective variety \( W \) of dimension \( n \), the \textit{de Rham complex} is the complex of sheaves  
\[
0 \to \cO_W \to \Omega_W^1 \to \Omega_W^2 \to \cdots \to \Omega_W^n \to 0,
\]
where \( \Omega_W^i \) denotes the sheaf of Kähler differential \( i \)-forms on \( W \).  

For a singular variety \( X \) (such as the nodal cubic curve), the analogous object is the \textit{derived de Rham complex}:
\[
\widehat{\dR}^{\bullet} : 0 \to \cO_X \to \BLb_X \to \bigwedge^2 \BLb_X \to \bigwedge^3 \BLb_X \to \cdots.
\]
Unlike the smooth case, this complex is generally unbounded for singular varieties. However, no completion is required for the nodal cubic curve \( X \), as \( \BLb_X \) is bounded.  

The Hodge filtration on \( \widehat{\dR}^{\bullet} \) induces a spectral sequence with \( E_1 \)-page
\[
\prescript{1}{}E^{p,q} = \mathbb{R}^{p+q}\Gamma\left(\widehat{\dR}^p[-p]\right) = \mathbb{R}^q\Gamma\left(\widehat{\dR}^p\right) = H^q\left(X, \bigwedge^p \BLb_X\right),
\]
which converges to \( H^{p+q}_{\mathrm{sing}}(X, \mathbb{C}) \) (see \cite{Bhatt}). We refer to this as the \textit{Hodge-to-de Rham spectral sequence}. Our main theorem is the following:

\begin{theorem}\label{HdR}
The Hodge to de Rham spectral sequence for $X$ degenerates at page $\prescript{2}{}{E}$.
\end{theorem}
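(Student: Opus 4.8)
The plan is to read the $\prescript{1}{}{E}$-page off the computations of \cref{Hochschild homology}, determine $d_1$, and observe that $\prescript{2}{}{E}$ is too thin to support any later differential.

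By the quoted results, $\prescript{1}{}{E}^{p,q}=H^q(X,\bigwedge^p\BLb_X)=\mathbb C$ exactly at the four positions $(0,0),(0,1),(1,0),(1,1)$ — using $H^0(X,\cO_X)=H^1(X,\cO_X)=\mathbb C$ and the computation of $H^\bullet(X,L_X)$ — and, for each $p\ge 2$, at $(p,1-p)$ and $(p,2-p)$ by \cref{hypercohomology}; it vanishes at all other $(p,q)$. The decisive feature is that every nonzero entry lies on one of the three anti-diagonals $p+q\in\{0,1,2\}$, with the anti-diagonal $p+q=0$ consisting of the single class $\prescript{1}{}{E}^{0,0}$. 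Since every $d_r$ raises total degree by one, on the $\prescript{1}{}{E}$-page the only differentials that can fail to vanish carry the anti-diagonal $p+q=1$ into the anti-diagonal $p+q=2$. These are: the map $\alpha\colon H^1(X,\cO_X)\to H^1(X,L_X)$ induced by $\cO_X\to\BLb_X$, and, for each $p\ge 1$, a map $\beta_p$ of one-dimensional spaces induced by $\bigwedge^p\BLb_X\to\bigwedge^{p+1}\BLb_X$ on the relevant cohomology sheaves (for $p\ge 2$ these sheaves are the skyscrapers $\cO_P$ at the node, while for $p=1$ one takes $H^0$ of the induced map $L_X=\cH^0(\BLb_X)\to\cH^0(\bigwedge^2\BLb_X)=\cO_P$). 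I claim that $\alpha=0$ and that $\beta_p$ is an isomorphism for every $p\ge 1$.

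Granting the claim, $\prescript{2}{}{E}$ is immediate: $\prescript{2}{}{E}^{0,0}=\mathbb C$, $\prescript{2}{}{E}^{0,1}=\ker\alpha=\mathbb C$, $\prescript{2}{}{E}^{1,1}=\coker\alpha=\mathbb C$, and $\prescript{2}{}{E}^{p,q}=0$ otherwise — the isomorphisms $\beta_p$ kill every remaining entry on the anti-diagonals $1$ and $2$. Thus $\prescript{2}{}{E}$ is supported on the three lattice points $(0,0),(0,1),(1,1)$, at most one per anti-diagonal, and since $d_r$ with $r\ge 2$ carries anti-diagonal $n$ to anti-diagonal $n+1$, every such $d_r$ has zero source or zero target. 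Hence $\prescript{\infty}{}{E}=\prescript{2}{}{E}$, i.e.\ the spectral sequence degenerates at $\prescript{2}{}{E}$. (As a check, $\dim_{\mathbb C}\prescript{\infty}{}{E}=3=\dim_{\mathbb C}H^\bullet(X^{\mathrm{an}};\mathbb C)$, the nodal cubic being homotopy equivalent to $S^2\vee S^1$; note in particular that the sequence does \emph{not} degenerate at $\prescript{1}{}{E}$, which is infinite-dimensional.)

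It remains to prove the claim. For $\beta_p$ with $p\ge 2$, the point is that, on the affine chart $\Spec R$ containing the node ($R=\mathbb C[x,y]/(x^3+x^2-y^2)$), the complex $\bigwedge^p\BLb_X$ is — as in \cref{wedge2,hypercohomology} — the three-term Koszul-type complex $R\to R^{\oplus 2}\to R$ placed in degrees $[-p,-p+1,-p+2]$ with the \emph{same} two matrices for all $p$, and $d_1$ is induced by the ``de Rham part'' of the total differential of $\widehat{\dR}^{\bullet}_X$, which on this chart is just the usual de Rham differential of $\Omega^\bullet_{\mathbb C[x,y]}$ acting along the exterior-power direction and hence carries no dependence on $p$. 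Consequently $\beta_p=\beta_2$ for all $p\ge 2$, and it suffices to see $\beta_2\ne 0$: apply the de Rham differential to the generating cocycle of $\cH^{-1}(\bigwedge^2\BLb_X)$ exhibited in \cref{wedge2} and check that the resulting element of $R$ is a unit at $P$, i.e.\ represents a generator of $\cH^{-1}(\bigwedge^3\BLb_X)=\operatorname{coker}(B)\cong\kappa(P)$; a direct calculation gives a nonzero constant. The case $p=1$ is an analogous short computation on $\Spec R$. Finally $\alpha=0$ by naturality of the de Rham differential: if $\nu\colon\mathbb P^1\to X$ is the normalization, there is a commutative square relating $d\colon\cO_X\to L_X$ to $\nu_*(d\colon\cO_{\mathbb P^1}\to\Omega^1_{\mathbb P^1})$, and since $L_X\to\nu_*\Omega^1_{\mathbb P^1}$ is surjective with skyscraper kernel one has an injection $H^1(X,L_X)\hookrightarrow H^1(X,\nu_*\Omega^1_{\mathbb P^1})$; therefore $\alpha$ factors through $H^1(X,\nu_*\cO_{\mathbb P^1})=H^1(\mathbb P^1,\cO_{\mathbb P^1})=0$, forcing $\alpha=0$.

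I expect the main obstacle to be making the de Rham differential of $\widehat{\dR}^{\bullet}_X$ explicit enough on the Koszul models of the $\bigwedge^p\BLb_X$ near the node to evaluate $\beta_1$ and $\beta_2$ — it is only canonical in the derived category, so one must fix a chain-level representative and check that the lower-degree Koszul corrections do not interfere — together with the (clean once set up) verification of the $p$-independence that collapses the infinitely many $\beta_p$ to a single computation.
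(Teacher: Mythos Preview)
Your approach is the paper's: identify the $d_1$ differentials (your $\alpha$, $\beta_1$, $\beta_{p\ge 2}$ are the paper's $\sigma$, $\gamma$, $\beta_{p-1}$), establish the vanishing of the first via the normalization and the nontriviality of the rest via a local computation at the node, and observe that $\prescript{2}{}{E}$ is too sparse for any $d_{r\ge 2}$. One slip: your claim that ``the only differentials that can fail to vanish carry anti-diagonal $1$ into anti-diagonal $2$'' overlooks $d_1\colon \prescript{1}{}{E}^{0,0}=H^0(\cO_X)\to H^0(L_X)=\prescript{1}{}{E}^{1,0}$; the paper treats this as a separate lemma (using convergence to $H^0_{\mathrm{sing}}(X,\BC)=\BC$), but it also follows immediately from $d_1\circ d_1=0$ once your $\beta_1$ is known to be injective, so your stated $\prescript{2}{}{E}^{0,0}=\BC$ is correct.
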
 

Before the proof, we write down a few terms in the first page, given explicitly as
\[\begin{tikzcd}[row sep=tiny,column sep=small]
1 & H^1(\cO_X)\ar[r,"\sigma"] & H^1(L_X)\\
0 & H^0(\cO_X)\ar[r,"\alpha"] & H^0(L_X)\ar[r,"\gamma"] & H^0(\bigwedge^2\BLb_X) \\
-1 & & & H^{-1}(\bigwedge^2 \BLb_X)\ar[r,"\beta_1"] & H^{-1}(\bigwedge^3\BLb_X)\\
-2 & & & & H^{-2}(\bigwedge^3\BLb_X)\ar[r,"\beta_2"] & H^{-2}(\bigwedge^4\BLb_X)\\
\vdots& & & & & \ddots & \ddots
\end{tikzcd} \]
Notice that from our calculation in last section, all these terms are $1$-dimensional.

We outline our proof in four steps, established in a series of lemmas.
\begin{itemize}
\item Step 1: $\alpha=0$, Lemma \ref{alpha}.
\item Step 2: $\beta_k$ is an isomorphism for all $k\geq 1$, Lemma \ref{beta}.
\item Step 3: $\gamma$ is an isomorphism, Lemma \ref{gamma}.
\item Step 4: $\sigma=0$, Lemma \ref{sigma}.
\end{itemize}

\begin{lemma}\label{alpha}
The map \(\alpha: H^0(\cO_X) \to H^0(L_X)\) is zero.
\end{lemma}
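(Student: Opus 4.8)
The plan is to identify $\alpha$ with the de Rham differential $d\colon H^0(X,\cO_X)\to H^0(X,L_X)$ on global sections and to observe that this differential annihilates constant functions.

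First I would pin down the target. By the first description of $L_X$, the sequence $0\to\cO_X(-3)\to\Omega^1_{\mathbb{P}^2}\vert_X\to L_X\to 0$ is exact, so the map $\cO_X(-3)\to\Omega^1_{\mathbb{P}^2}\vert_X$ defining the two-term complex $\BLb_X$ is injective; hence $H^{-1}(\BLb_X)=0$ and the truncation map $\BLb_X\to L_X[0]$ is a quasi-isomorphism, giving $\mathbb{R}^0\Gamma(\BLb_X)\cong H^0(X,L_X)$ (this is why the $(1,0)$-entry of the first page reads $H^0(L_X)$). Under this identification, the edge differential $\prescript{1}{}E^{0,0}\to\prescript{1}{}E^{1,0}$ is $\mathbb{R}^0\Gamma$ applied to the first arrow $\cO_X\to\BLb_X$ of the derived de Rham complex, and on $H^0$-sheaves that arrow is exactly the universal $\BC$-linear derivation $d\colon\cO_X\to L_X$. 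Thus $\alpha$ is the map induced by $d$ on global sections.

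Next I would note that $X$, the zero locus of the irreducible cubic $y^2z=x^3-x^2z$, is an integral projective curve over $\BC$, so $H^0(X,\cO_X)=\BC$ is generated by the unit section $1$. Since $d$ is a $\BC$-linear derivation, $d(1)=0$, hence $\alpha(1)=0$, and therefore $\alpha=0$.

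I do not expect a genuine obstacle here: the only points needing minor care are matching the abstract $E_1$-differential with the concrete derivation $d$ and the identification $H^0(X,\cO_X)=\BC$, both of which are standard. One can even drop the irreducibility input, since $d$ kills every locally constant function and $H^0(\cO_X)$ of any proper reduced $\BC$-scheme consists of locally constant functions; the same argument then shows the analogue of $\alpha=0$ for arbitrary projective nodal curves, which will be convenient in Section \ref{General cases}.
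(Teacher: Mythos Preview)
Your proof is correct and takes a genuinely different route from the paper's. You identify $\alpha$ concretely as $H^0$ of the universal derivation $d\colon\cO_X\to L_X$ and then use that $H^0(X,\cO_X)=\BC$ consists of constants, which $d$ kills. The paper instead argues indirectly from the abutment: the spectral sequence converges to $H^*_{\mathrm{sing}}(X,\BC)$ (via \cite{Bhatt}), and since $H^0(\cO_X)$ is the only term on the $0$-diagonal and $H^0_{\mathrm{sing}}(X,\BC)=\BC$, this term must survive, forcing $\alpha=0$. Your argument is more elementary and self-contained --- it does not invoke the comparison with singular cohomology --- and, as you note, it extends verbatim to any proper reduced curve. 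The paper's approach, on the other hand, is uniform with its later treatment of $\sigma=0$ (Step~4), where the same ``only term on its diagonal must survive'' trick is reused; so the convergence input is being amortized across several steps.
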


\begin{proof}
The Hodge-to-de Rham spectral sequence for \(X\) converges to \(H^0_{\mathrm{sing}}(X, \mathbb{C}) \cong \mathbb{C}\). In the \(\prescript{1}{}E\)-page, the term \(H^0(\cO_X)\) lies on the \(0\)-diagonal (where \(p + q = 0\)) and is the only nontrivial term in this diagonal. For the spectral sequence to converge to a one-dimensional space \(H^0_{\mathrm{sing}}(X, \mathbb{C})\) on the 0-th diagonal, no nontrivial differentials can act on \(H^0(\cO_X)\). Thus, \(H^0(\cO_X)\) must survive to the \(\prescript{\infty}{}E\)-page, implying \(\alpha = 0\).  
\end{proof}

\begin{lemma}\label{beta}
The maps $\beta_k:H^{-k}(\bigwedge^{k+1}\BLb_X)\to H^{-k}(\bigwedge^{k+2}\BLb_X)$ are all isomorphisms for $k\geq 1$.
\end{lemma}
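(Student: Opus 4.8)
The plan is to compute both sides very explicitly using the description of the exterior powers obtained in Corollary \ref{hypercohomology}, and then identify the differential $\beta_k$ as a concrete map of complexes localized at the node $P$. From the proof of Corollary \ref{hypercohomology} we know that, for every $k\geq 1$,
\[
\bigwedge^{k}\BLb_X \;\simeq\; \cO_X(-3k+3)\otimes\bigl(0\to 0\to \cO_P\to \cO_P\to 0\bigr)[k-2],
\]
so that $\bigwedge^{k}\BLb_X$ is supported at $P$ and its hypercohomology in degrees $-k+2$ and $-k+1$ is one-dimensional, identified (after twisting by the trivialization of a line bundle near $P$) with the stalk $\cO_{P}=\BC$. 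The differential $\widehat{\dR}^{k+1}[-(k+1)]\to\widehat{\dR}^{k+2}[-(k+2)]$ in the Hodge-to-de Rham complex is the derived exterior differentiation $d\colon \bigwedge^{k+1}\BLb_X\to\bigwedge^{k+2}\BLb_X$; restricted to the affine chart $\mathcal D(z)\cap X=\Spec R$ with $R=\BC[x,y]/(x^3+x^2-y^2)$, it is induced by the universal derivation $d\colon R\to\Omega_{R}$ together with the Koszul-type differential on $\Sym^\bullet(\BLb_X[1])$. So the whole problem reduces to: take the two-term complex $[\cO_P\xrightarrow{\ \cong\ }\cO_P]$ representing each $\bigwedge^{k}\BLb_X$ (up to twist and shift), write down the map between consecutive ones induced by $d$, and check it is a quasi-isomorphism in the relevant cohomological degree.

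The concrete steps I would carry out are: (1) Fix the chart $\Spec R$ and recall from Lemma \ref{wedge2} the explicit generators — $H^0$ of the untwisted complex generated by $1$ and $H^{-1}$ generated by the cocycle $\bigl((3x+2)y,\,2(x^2+x)\bigr)$, both of which become generators of a copy of $\cO_P$ after passing to the associated graded of the torsion filtration. (2) Write $\bigwedge^{k+1}\BLb_X$ as $\cO_X(-3k)\otimes(\text{the fixed 2-term node complex})[k-1]$ and similarly for $\bigwedge^{k+2}\BLb_X$ with $\cO_X(-3k-3)$ and shift $[k]$, then unwind $d$ as a degree-preserving map $\Sym^{k+1}\to\Sym^{k+2}$ of the symmetric algebra, i.e. wedging with $d(\text{generator})$. (3) Track what this does on the one surviving cohomology class: since the generator of $H^{-k}(\bigwedge^{k+1}\BLb_X)$ and the generator of $H^{-k}(\bigwedge^{k+2}\BLb_X)$ both live at the node, and $d$ sends the local uniformizer to a unit multiple of $d$ of it, the induced map on the one-dimensional spaces is multiplication by a nonzero scalar. (4) Conclude $\beta_k$ is an isomorphism for all $k\geq 1$. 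An alternative, cleaner route for step (3): the complex $\bigl(\bigwedge^\bullet\BLb_X,\,d\bigr)$ in degrees $\geq 1$, being supported at $P$, is computed locally at the node; there the completed local ring is $\BC[[u,v]]/(uv)$, whose derived de Rham complex (truncated away from $\cO_X$ and $\BLb_X$) is acyclic because the derived de Rham cohomology of any affine scheme agrees with that of its reduction and, more to the point, by the conjugate-filtration / Cartier-type vanishing the higher pieces contribute nothing new — one can just quote that $\mathbb H^*$ of the stupid-truncated-below-by-$2$ derived de Rham complex of the node vanishes, which forces all the $\beta_k$ to be isomorphisms by a dimension count against Corollary \ref{hypercohomology}.

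The main obstacle I anticipate is step (3)/(2): making the bookkeeping of twists and shifts honest, and in particular verifying that the scalar is genuinely nonzero rather than accidentally $0$. The danger is that $d$ might kill the torsion class supported at $P$ for parity or Koszul-sign reasons; this is exactly the kind of thing that fails for $\alpha$ and $\sigma$ (which are zero) and holds for $\gamma$ and $\beta_k$, so the lemma is asserting that the node "sees" $d$ nontrivially in all high degrees. The safest way to settle this is the local computation over $\BC[[u,v]]/(uv)$: there $\BLb$ is represented by $[\,(u\,dv+v\,du)\,]\to (R\,du\oplus R\,dv)$ in degrees $[-1,0]$, and a direct Koszul computation shows $\bigwedge^k$ has a single nonzero cohomology sheaf $\cO_P$ in each of two adjacent degrees with the de Rham differential acting invertibly between the top one of $\bigwedge^{k+1}$ and of $\bigwedge^{k+2}$ — I would present this local model computation once and then note it propagates to all $k\geq 1$ by the periodicity $\bigwedge^{k}\BLb_X\simeq\cO_X(-3)\otimes\bigwedge^{k-1}\BLb_X[\,\cdot\,]$ already used above. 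Consistency check: Corollary \ref{hypercohomology} gives $\dim\mathbb H^{-k}(\bigwedge^{k+1}\BLb_X)=\dim\mathbb H^{-k}(\bigwedge^{k+2}\BLb_X)=1$, and the known answer $H^*_{\mathrm{sing}}(X,\BC)$ (rank $1$ in degrees $0$ and $2$, rank $2$ in degree $1$, for the nodal cubic which is homotopy-equivalent to $S^1\vee S^2$) leaves no room for any of these classes to survive, which is consistent with $\beta_k$ being an isomorphism.
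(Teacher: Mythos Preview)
Your overall strategy---localize at the node, use the periodicity $\bigwedge^{k+1}\BLb_X\simeq\cO_X(-3)\otimes\bigwedge^{k}\BLb_X[\,\cdot\,]$ to reduce to $k=1$, then compute explicitly---is exactly the paper's approach. But the proposal defers the one step that carries all the content: actually verifying that the de Rham differential is nonzero on the relevant cohomology class at $P$. The paper does this by building a cdga resolution $S=\BC[x,y]\otimes\BC[\epsilon]/(\epsilon^2)$ with $\delta\epsilon=x^3+x^2-y^2$, writing out $L_{S/\BC}$ and its exterior powers together with an explicit de Rham differential $d^{\dR}$, and then exhibiting a concrete cocycle in $\ker\Delta^{\wedge 2}_1$ whose image under $d^{\dR}$ is a generator of $H^{-1}(\bigwedge^3 L_{S/\BC})$. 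Your step (3) (``$d$ sends the local uniformizer to a unit multiple of $d$ of it'') is not an argument, and you correctly flag it as the main obstacle; until you actually carry out some version of this calculation (doing it over $\BC[[u,v]]/(uv)$ instead of $S$ would be fine, and is in fact the model the paper uses for $\gamma$), the proof is incomplete.

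Your alternative route does not work as stated. The claim that $\BH^*$ of the stupid truncation $\bF_H^{2}\widehat{\dR}^{\bullet}$ vanishes for the affine node is precisely what you are trying to prove (together with the analogue of $\gamma$); it is not a result one can simply quote, and the conjugate-filtration/Cartier arguments you invoke are characteristic-$p$ techniques that do not yield this directly over $\BC$. Two smaller points: the nodal cubic has $H^1_{\mathrm{sing}}=\BC$, not $\BC^2$ (it is homotopy equivalent to $S^2\vee S^1$, so $b_1=1$); and, as you note yourself, matching the Betti numbers is only a consistency check, not a proof that $\beta_k$ is an isomorphism, since the requisite cancellations could in principle occur on later pages.
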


\begin{proof}
First observation is that $\bigwedge^{k+1} \BLb_X$ is supported at the node when $k\geq 1$, so the computation of the maps $\beta_k$ is local. Thus we can use a local affine model around the node, i.e., the affine nodal curve.

Let $V=\BC\langle e_x,e_y,e_{\epsilon}\rangle$ be a graded $\BC$-vector space with $\deg e_x=\deg e_y=0$ and $\deg e_{\epsilon}=-1$. Then $S:=\Sym^*V\cong \BC[x,y]\otimes \BC[\epsilon]/(\epsilon^2)$ forms a graded $\BC$-algebra with $\deg x=\deg y=0, \deg \epsilon=-1$. When endowing it with a $\BC[x,y]$-linear differential $\delta$ on $S$ as follows:
\[\BC[x,y]\cdot \epsilon \xrightarrow{\delta:\epsilon \mapsto x^3+x^2-y^2} \BC[x,y], \]
$(S,\delta)$ forms a differential graded algebra that is quasi-isomorphic to $R=\BC[x,y]/(x^3+x^2-y^2)$, i.e., $S$ is a differential graded resolution of $R$. Thus to compute $\bigwedge^k\BLb_X$ when $k\geq 2$, we can use the dg model $\bigwedge^kL_{S/\BC}$. 

Recall that $L_{S/\BC}\cong S\otimes_{\BC} V$ is a $\BC[x,y]$-module. We claim that there is a way to construct a $\BC[x,y]$-linear differential $\Delta$ such that $(L_{S/k},\Delta)$ forms a differential graded $S$-module. Explicitly, $(L_{S/\BC},\Delta)$ is defined as follows:

\[\begin{tikzcd}
\BC[x,y]\cdot\epsilon\otimes d\epsilon\ar[r,"\Delta_2"] & \begin{matrix}
\BC[x,y]\cdot 1\otimes d\epsilon \\
\oplus \\
\BC[x,y]\cdot \epsilon \otimes dx\\
\oplus \\
\BC[x,y]\cdot \epsilon \otimes dy
\end{matrix}\ar[r,"\Delta_1"] & \begin{matrix}
\BC[x,y]\cdot 1\otimes dx \\
\oplus \\
\BC[x,y]\cdot 1\otimes dy
\end{matrix}
\end{tikzcd} \]
Here we use $dx, dy ,d\epsilon$ to denote $e_x,e_y,e_{\epsilon}$ respectively. The differentials $\Delta_1,\Delta_2$ are defined as:
\[\Delta_2(\epsilon\otimes d\epsilon):=(x^3+x^2-y^2)\cdot1\otimes d\epsilon -(3x^2+2x)\cdot\epsilon\otimes dx +2y\cdot \epsilon \otimes dy, \]
\[\Delta_1(1\otimes d\epsilon):=(3x^2+2x)\cdot1\otimes dx -2y\cdot 1\otimes dy, \]
\[\Delta_1(\epsilon \otimes dx):=\epsilon \cdot 1\otimes dx, \]
\[\Delta_1(\epsilon \otimes dy):=\epsilon \cdot 1\otimes dy. \]
With the above definitions, it is easy to check that $(L_{S/\BC},\Delta)$ is a differential graded $(S,\delta)$-module, and there exists a naturally defined chain map $d^{dR}:S\to L_{S/\BC}$ called the de Rham differential as follows:
\[\begin{tikzcd}
& \BC[x,y]\cdot\ar[r,"\delta"]\ar[d,"d^{dR}_1"] \epsilon & \BC[x,y] \ar[d,"d^{dR}_0"] \\
\BC[x,y]\cdot\epsilon\otimes d\epsilon\ar[r,"\Delta_2"] & \begin{matrix}
\BC[x,y]\cdot 1\otimes d\epsilon \\
\oplus \\
\BC[x,y]\cdot \epsilon \otimes dx\\
\oplus \\
\BC[x,y]\cdot \epsilon \otimes dy
\end{matrix}\ar[r,"\Delta_1"] & \begin{matrix}
\BC[x,y]\cdot 1\otimes dx \\
\oplus \\
\BC[x,y]\cdot 1\otimes dy
\end{matrix}
\end{tikzcd} \]
Here, 
\[d^{dR}_0(f):=\frac{\partial f}{\partial x}\cdot1\otimes dx+\frac{\partial f}{\partial y}\cdot 1\otimes dy,\] 
and 
\[d^{dR}_1(g\cdot \epsilon):=g\cdot 1\otimes d\epsilon +\frac{\partial g}{\partial x}\cdot \epsilon \otimes dx+\frac{\partial g}{\partial y}\cdot \epsilon \otimes dy. \]
With the above definition of $(L_{S/\BC},\Delta)$ one can compute higher exterior powers $\bigwedge^k L_{S/\BC}$, and they will still be dg $S$-modules. One can extend the definition of $d^{dR}$ to higher exterior powers to form the de Rham complex $(\bigwedge^{\bullet}L_{S/\BC},d^{dR})$. 

For our purpose, it will be enough to show that the map $H^{-k}(\bigwedge^{k+1}L_{S/\BC})\to H^{-k}(\bigwedge^{k+2}L_{S/\BC})$ are surjective. However, it is actually enough to prove this just for $k=1$, since $\bigwedge^{k+1}L_{S/\BC}$ is basically a shift of $\bigwedge^2L_{S/\BC}$ when $k\geq 1$. 

When $k=1$ we only need to consider the degree -1 part of the morphism $d^{dR}:\bigwedge^2L_{S/\BC}\to \bigwedge^3L_{S/\BC}$. By an explicit calculation we have
\[\begin{tikzcd}
\begin{matrix}
\BC[x,y]\cdot 1\otimes (dx\otimes d\epsilon) \\
\oplus \\
\BC[x,y]\cdot 1 \otimes (dy\otimes d\epsilon)\\
\oplus \\
\BC[x,y]\cdot \epsilon \otimes (dx\wedge dy)
\end{matrix}\ar[r,"\Delta^{\wedge2}_1"]\ar[d,"d^{dR}"] & \BC[x,y]\cdot 1\otimes (dx\wedge dy)\\
\BC[x,y]\cdot 1\otimes(dx\wedge dy)\otimes d\epsilon
\end{tikzcd} \]
Here we use $\Delta^{\wedge 2}$ to denote the differential in $\bigwedge^2L_{S/\BC}$. In matrix form it can be written as 
$\Delta^{\wedge2}_1=\begin{pmatrix}
-2y & -3x^2-2x & x^3+x^2-y^2
\end{pmatrix}$. The de Rham differential is given as 
\[d^{dR} : \begin{pmatrix}
f\\ g\\ h
\end{pmatrix} \mapsto \left(\frac{\partial g}{\partial x}-\frac{\partial f}{\partial y}+h\right)\cdot 1\otimes (dx\wedge dy)\otimes d\epsilon. \]
In particular, $\begin{pmatrix}
-3xy-2y\\2x^2+2x\\ 6x+4
\end{pmatrix}\in \ker (\Delta^{\wedge 2}_1)$ and $d^{dR}(\begin{pmatrix}
-3xy-2y\\2x^2+2x\\ 6x+4
\end{pmatrix})=(13x+8)\cdot 1\otimes (dx\wedge dy)\otimes d\epsilon$ is a generator for $H^{-1}(\bigwedge^3L_{S/\BC})\cong \BC \cdot 1\otimes (dx\wedge dy)\otimes d\epsilon$. Thus $H^{-1}(\bigwedge^2 L_{S/\BC}) \to H^{-1}(\bigwedge^3 L_{S/\BC})$ is surjective, and the same holds for $H^{-1}(\bigwedge^2\BLb_X)\to H^{-1}(\bigwedge^3 \BLb_X)$.
\end{proof}

\begin{lemma}\label{gamma}
The map \(\gamma: H^0(L_X) \to H^0\left(\bigwedge^2\BLb_X\right)\) is an isomorphism.
\end{lemma}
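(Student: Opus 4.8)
\textbf{Proof proposal for Lemma \ref{gamma}.}

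The plan is to reduce the computation of $\gamma$ to a purely local statement at the node $P$ and then to run that computation explicitly, in the style of the proof of Lemma \ref{beta}. First I would pin down the two $1$-dimensional spaces involved. From the second description of $L_X$ together with $0\to\cI_P\to\cO_X\to\cO_P\to0$ one gets $H^0(X,\cI_P)=0$ (the evaluation map $H^0(\cO_X)\to H^0(\cO_P)$ is an isomorphism), so the inclusion of the torsion subsheaf $\cO_P\hookrightarrow L_X$ induces an isomorphism $H^0(X,\cO_P)\xrightarrow{\sim}H^0(X,L_X)$; thus $H^0(L_X)$ is generated by the torsion class of $L_X$, which is supported at $P$. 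On the other side, the proof of Lemma \ref{wedge2} shows $\cH^0(\bigwedge^2\BLb_X)\cong\cO_P$, a skyscraper at $P$; since $\cH^{-1}(\bigwedge^2\BLb_X)$ is also supported at $P$, the edge map $\mathbb H^0(X,\bigwedge^2\BLb_X)\to H^0(X,\cH^0(\bigwedge^2\BLb_X))$ is an isomorphism. Because $\BLb_X\simeq L_X$, the differential $\gamma=d_1$ is therefore just $H^0(X,-)$ applied to the sheaf map $\varphi=\cH^0(d^{\mathrm{dR}})\colon L_X\to\cH^0(\bigwedge^2\BLb_X)$ induced by the de Rham differential. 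Consequently $\gamma$ is an isomorphism if and only if $\varphi$ is nonzero on the torsion subsheaf of $L_X$, which is a question about the local ring of $X$ at $P$.

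Next I would carry this out in the affine chart $X\cap\mathcal D(z)=\Spec R$ with $R=\BC[x,y]/(x^3+x^2-y^2)$, modelling $\BLb_X$ by the cotangent complex $\bigl[\,R\xrightarrow{df}R\,dx\oplus R\,dy\,\bigr]$ (equivalently the dg model of Lemma \ref{beta}), so that $\bigwedge^2\BLb_X$ is modelled by $\bigl[\,R\,e^{(2)}\to R\,e\,dx\oplus R\,e\,dy\to R\,dx\wedge dy\,\bigr]$ and hence $\cH^0(\bigwedge^2\BLb_X)\cong R\,dx\wedge dy\big/(2y,\,3x^2+2x)= R/(2y,3x^2+2x)\cong\BC$. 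A direct calculation identifies the torsion generator of $\Omega^1_R$ as $\tau=-y(3x+2)\,dx+2x(x+1)\,dy$: one checks $x\tau=y\tau=0$ using $x^3+x^2=y^2$ and $(3x^2+2x)\,dx=2y\,dy$, and that $\tau\notin R\cdot df$. Applying the de Rham differential (which on the degree-$0$ part is the usual exterior derivative) gives $d^{\mathrm{dR}}\tau=(7x+4)\,dx\wedge dy$, whose class in $\cH^0(\bigwedge^2\BLb_X)=R/(2y,3x^2+2x)$ is nonzero: setting $y=0$ one has $(2y,3x^2+2x)\mapsto(x^3+x^2,3x^2+2x)=(x)$ in $\BC[x]$, so the quotient is $\BC$ and $7x+4\mapsto 4\ne 0$. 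Therefore $\varphi(\tau)\ne0$, so $\gamma$ is a nonzero map between $1$-dimensional spaces, hence an isomorphism.

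I expect the main obstacle to be the bookkeeping in the reduction step rather than the final computation: one must make precise that $\gamma$ is exactly $H^0(X,-)$ of the induced sheaf map on $\cH^0$, and one must handle the de Rham differential on the (derived) exterior powers of the cotangent complex carefully. Once the model is fixed the computation is elementary. As a sanity check/alternative, note that the structure of the spectral sequence established in Lemmas \ref{alpha} and \ref{beta} already identifies $\ker\gamma$ with $F^1 H^1_{\mathrm{dR}}(X)$, which vanishes because $H^1$ of a nodal curve is pure of weight $0$; but turning this into a proof requires comparing the Hodge filtration used here with Deligne's mixed Hodge structure, so I would present the direct local computation above.
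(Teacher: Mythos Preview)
Your proof is correct and follows the same strategy as the paper's: identify $H^0(L_X)$ with the torsion class supported at $P$, reduce $\gamma$ to a local computation of the de Rham differential on that torsion generator, and check the result is nonzero in $\cH^0(\bigwedge^2\BLb_X)\cong\cO_P$. The only difference is that the paper passes via formal neighborhoods to the simpler local model $Z=\Spec\BC[x,y]/(xy)$ (where the torsion generator is $x\,dy$ and $d(x\,dy)=dx\wedge dy$ is visibly the generator), whereas you work directly in the Zariski affine chart of $X$ and carry out the slightly messier computation with $\tau$.
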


\begin{proof}
It suffices to prove surjectivity of \(\gamma\). From Lemma \ref{beta}, \(L_X\) fits into the short exact sequence
\[
0 \to \cO_P \to L_X \to \cI_P \to 0,
\]
where \(P\) is the node. Since \(\dim H^0(L_X) = \dim H^0(\cO_P) = 1\), we work with the commutative diagram:
\[
\begin{tikzcd}[column sep=small]
H^0(\cO_P)\ar[r]\ar[d,equal] & H^0(L_X)\ar[r,"\gamma"]\ar[d] & H^0\left(\bigwedge^2\BLb_X\right)\ar[d,"\simeq"'{anchor=south,rotate=-90}]\\
H^0(\cO_P)\ar[r]\ar[d,equal] & H^0(L_X\vert_{U})\ar[r]\ar[d,dash,"\simeq"'{anchor=south,rotate=-90}] & H^0\left(\bigwedge^2\BLb_X\vert_{U}\right)\ar[d,dash,"\simeq"'{anchor=south,rotate=-90}]\\
H^0(\cO_P)\ar[r]\ar[d,equal] & H^0(L_Z\vert_{V})\ar[r] & H^0\left(\bigwedge^2\BLb_Z\vert_{V}\right)\\
H^0(\cO_P)\ar[r] & H^0(L_Z)\ar[r]\ar[u] & H^0\left(\bigwedge^2\BLb_Z\right)\ar[u,"\simeq"'{anchor=south,rotate=-90}]
\end{tikzcd}
\]
Here, \(Z = \Spec \BC[x,y]/(xy)\) is the local affine nodal scheme, and \(U, V\) are formal neighborhoods of \(P\) in \(X\) and \(Z\), respectively. The vertical isomorphisms hold because \(\bigwedge^2\BLb_X\) is supported at \(P\). The key identification \(H^0(L_X\vert_U) \simeq H^0(L_Z\vert_V)\) follows from $L_X\vert_{U}\simeq L_{U}\simeq L_{V}\simeq L_Z\vert_{V}$ \cite[Proposition 3.9]{Perez}.

For \(Z\), the cotangent complex \(L_Z\) admits the resolution:
\[
0 \to T \xrightarrow{1 \mapsto x\,dy + y\,dx} T \cdot dx \oplus T \cdot dy \to L_Z \to 0,
\]
where \(T = \BC[x,y]/(xy)\). The induced maps on global sections give:
\[
\begin{tikzcd}[row sep=small]
T/(x,y) \ar[r] & \frac{T \cdot dx \oplus T \cdot dy}{x\,dy + y\,dx} \ar[r] & T/(x,y) \cdot dx \wedge dy \\
1 \ar[r,mapsto] & x\,dy \ar[r,mapsto] & 1 \cdot dx \wedge dy
\end{tikzcd}
\]
This shows \(H^0(\cO_P) \to H^0(L_Z) \to H^0\left(\bigwedge^2\BLb_Z\right)\) is an isomorphism. By formal local equivalence \(X \simeq Z\) at \(P\), \(\gamma\) is surjective (hence an isomorphism) for \(X\). 
\end{proof}

\begin{lemma}\label{sigma}
The map $\sigma:H^1(\cO_X)\to H^1(L_X)$ is $0$.
\end{lemma}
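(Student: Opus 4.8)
The plan is to rerun the convergence argument of Lemma \ref{alpha}, but on the diagonal $p+q=1$ of the $E_1$-page and now using everything proved so far about $\alpha$, $\gamma$ and the $\beta_k$. The geometric input is the convergence of the Hodge-to-de Rham spectral sequence to $H^\bullet_{\mathrm{sing}}(X,\mathbb{C})$ recalled above, together with $\dim_{\mathbb{C}}H^1_{\mathrm{sing}}(X,\mathbb{C})=1$; the latter holds because $X$ is $\mathbb{P}^1$ with the two points over the node identified, hence homotopy equivalent to $S^1\vee S^2$ (equivalently, the normalization sequence $0\to\mathbb{C}_X\to\nu_*\mathbb{C}_{\mathbb{P}^1}\to\mathbb{C}_P\to 0$ of constant sheaves has $H^0(X,\mathbb{C})\xrightarrow{\sim}H^0(\mathbb{P}^1,\mathbb{C})$ and $H^1(\mathbb{P}^1,\mathbb{C})=0$, so $H^1(X,\mathbb{C})\cong\mathbb{C}$).

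First I would record the shape of the $E_1$-page. By Corollary \ref{hypercohomology} and the computation $H^0(\cO_X)=H^1(\cO_X)=\mathbb{C}$, every nonzero entry $E_1^{p,q}=H^q(X,\bigwedge^p\BLb_X)$ is one-dimensional and lies on one of the three diagonals $p+q\in\{0,1,2\}$. Along the diagonal $p+q=1$ the entries are $E_1^{0,1}=H^1(\cO_X)$ and $E_1^{k,1-k}=H^{1-k}(X,\bigwedge^k\BLb_X)$ for all $k\ge 1$; the only possible source of an incoming $d_1$ at the latter, namely $E_1^{k-1,1-k}=H^{1-k}(X,\bigwedge^{k-1}\BLb_X)$, vanishes for $k\ge 2$ because its two nonzero cohomological degrees are $3-k$ and $2-k$ by Corollary \ref{hypercohomology}, while for $k=1$ that incoming map is $\alpha$, which is zero by Lemma \ref{alpha}.

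Next I would compute $E_2$ along this diagonal. The $d_1$ leaving $E_1^{1,0}=H^0(L_X)$ is $\gamma$, an isomorphism by Lemma \ref{gamma}; the $d_1$ leaving $E_1^{k,1-k}$ for $k\ge 2$ is $\beta_{k-1}$, an isomorphism by Lemma \ref{beta}. Together with the vanishing of all incoming $d_1$'s noted above, this yields $E_2^{p,1-p}=0$ for every $p\ge 1$, hence $E_\infty^{p,1-p}=0$ for every $p\ge 1$ (each $E_{r+1}$ being a subquotient of $E_r$). Therefore the abutment in total degree $1$ is concentrated in the single graded piece $E_\infty^{0,1}$, so $\dim_{\mathbb{C}}E_\infty^{0,1}=\dim_{\mathbb{C}}H^1_{\mathrm{sing}}(X,\mathbb{C})=1$.

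Finally I would compare with $\ker\sigma$. As $E_1^{-1,1}=0$, there is no incoming differential at $(0,1)$ on any page, so $E_2^{0,1}=\ker\bigl(\sigma\colon H^1(\cO_X)\to H^1(L_X)\bigr)$ and $E_\infty^{0,1}$ is a subspace of $E_2^{0,1}$. Hence $1=\dim_{\mathbb{C}}E_\infty^{0,1}\le\dim_{\mathbb{C}}\ker\sigma\le\dim_{\mathbb{C}}H^1(\cO_X)=1$, forcing $\ker\sigma=H^1(\cO_X)$, i.e.\ $\sigma=0$. Given Lemmas \ref{alpha}, \ref{gamma} and \ref{beta} there is no real obstacle; the one point requiring care is the bookkeeping that the $E_1$-page is supported only on the diagonals $0,1,2$ and that nothing nonzero maps into the $p+q=1$ diagonal, so that the vanishing of $E_2^{p,1-p}$ for $p\ge1$ propagates all the way to $E_\infty$.
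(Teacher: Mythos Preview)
Your proof is correct and takes a genuinely different route from the paper's. The paper proves $\sigma=0$ by comparing with the normalization $\pi:\widetilde{X}\cong\BP^1\to X$: it constructs a short exact sequence $0\to\cO_P\to L_X\to\pi_*\Omega^1_{\widetilde{X}}\to 0$ via an explicit local computation, deduces that $H^1(L_X)\to H^1(\widetilde{X},\Omega^1_{\widetilde{X}})$ is an isomorphism, and then uses the commutative square of $E_1$-pages induced by $\pi$ together with $H^1(\widetilde{X},\cO_{\widetilde{X}})=0$ to force $\sigma=0$. That argument is self-contained and does not rely on Lemmas~\ref{beta} or~\ref{gamma}. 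Your argument, by contrast, is a pure dimension count on the diagonal $p+q=1$: you use the already-established isomorphisms $\gamma$ and $\beta_k$ to kill every $E_2^{p,1-p}$ with $p\ge 1$, and then match the sole surviving graded piece $E_\infty^{0,1}\subseteq\ker\sigma$ against $\dim H^1_{\mathrm{sing}}(X,\BC)=1$. This is shorter and avoids the local construction of the comparison sequence, at the price of depending on the earlier lemmas. It is worth noting that when the paper treats the general genus-$g$, $n$-node curve in Section~\ref{General cases}, it drops the normalization argument for $\sigma$ and instead runs exactly this kind of convergence argument (there on the diagonal $p+q=2$, using $H^2_{\mathrm{sing}}(Y,\BC)=\BC$); so your approach is in fact the one that generalizes.
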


\begin{proof}
The key idea, due to Benjamin Antieau, is to compare the spectral sequence to the spectral sequence associated to the normalization $\widetilde{X}$ of $X$. Explicitly, consider the resolution of singularities $\pi: \widetilde{X}\to X$, where $\widetilde{X}$ is isomorphic to $\BP^1$. Then consider the Hodge to de Rham spectral sequence associated to $\widetilde{X}$, where the first page is given by $\prescript{1}{}{E}_{\widetilde{X}}^{p,q}:=H^q(\widetilde{X},\Omega_{\widetilde{X}}^p)$. The morphism \(\pi\) induces a map of spectral sequences $E_X\to E_{\widetilde{X}}$, yielding a commutative diagram: 
\begin{equation}\label{E1}
\begin{tikzcd}
H^1(X,\cO_X)\ar[r,"\sigma"]\ar[d] & H^1(X,L_X)\ar[d]\\
H^1(\widetilde{X},\cO_{\widetilde{X}})\ar[r] & H^1(\widetilde{X},\Omega^1_{\widetilde{X}})
\end{tikzcd} \end{equation}
Since \(H^1(\widetilde{X}, \cO_{\widetilde{X}}) = 0\), \(\sigma = 0\) if the right vertical map \(H^1(X, L_X) \to H^1(\widetilde{X}, \Omega^1_{\widetilde{X}})\) is an isomorphism.

To prove this, we claim that there exists a short exact sequence of sheaves 
\begin{equation}\label{ses comparing cotangent sheaves}
	0\to \cO_P \to L_X \to \pi_*\Omega^1_{\widetilde{X}}\to 0,
\end{equation}
here $\cO_P$ denotes the skyscraper sheaf at the node $P$. The map $L_X\to \pi_*\Omega^1_{\widetilde{X}}$ is an isomorphism away from the node $P$, so we verify it locally.

On an affine open neighborhood $U\ni P$ with $\cO_X(U)=\BC[x,y]/(xy)$, the cotangent sheaf satisfies:
\[
L_X(U) = T\langle dx, dy \rangle/(x\,dy + y\,dx = 0), \quad T = \mathbb{C}[x,y]/(xy).
\]

It is easy to see that $\pi^{-1}(U)$ is a disjoint of two lines, we use $s, t$ to denote their coordinates. Then $\Omega^1_{\widetilde{X}}(\pi^{-1}(U))=\BC[s]ds\oplus \BC[t]dt$. 

Recall that on $U$ the map $\cO_X\to \pi_*\cO_{\widetilde{X}}$ is given by the ring map
\[\BC[x,y]/(xy)\to \BC[s]\oplus \BC[t] , \quad x\mapsto (s,0), y\mapsto (0,t). \]
So on $U$ the map $L_X \to \pi_*\Omega^1_{\widetilde{X}}$ is given by
\begin{equation}\label{map between cotangent sheaves}
	T\langle dx,dy \rangle/(xdy+ydx=0) \to \BC[s]ds\oplus \BC[t]dt, \quad dx\mapsto (ds,0), dy \mapsto (0,dt).
\end{equation} 
Notice that this is map of $\cO_X(U)$-module, where the module structure on $\pi_*\Omega^1_{\widetilde{X}}(U)$ is given by the action 
\[(f_1(x)+f_2(y))\cdot (g(s)ds,h(t)dt) := (f_1(s)g(s)ds,f_2(t)h(t)dt). \]
This makes sense since in $\cO_X(U)$, $xy=0$, so any element could be represented by $f_1(x)+f_2(y)$.

Now it is easy to see that the map \ref{map between cotangent sheaves} is surjective, with the kernel generated by the element $xdy$. However, notice that $T\cdot xdy \cong \BC \cdot xdy \oplus \BC \cdot x^2dy \oplus \cdots \cong \BC \cdot xdy$, since $x^2dy=x(-ydx)=-xydx=0$ in $T\langle dx,dy\rangle /(xdy+ydx=0)$. Hence the kernel is a skyscraper sheaf $\cO_P$. This proves the short exact sequence \ref{ses comparing cotangent sheaves}.

This short exact sequence induces a long exact sequence on cohomology
\[\cdots \to H^1(\cO_P)\to H^1(L_X) \to H^1(\pi_*\Omega^1_{\widetilde{X}}) \to H^2(\cO_P) \to \dots.  \]
Since $H^1(\cO_P)=H^2(\cO_P)=0$, \(H^1(L_X) \to H^1(\pi_*\Omega^1_{\widetilde{X}})\) is an isomorphism, proving \(\sigma = 0\).

\end{proof}

Combining Lemmas \ref{alpha}, \ref{beta}, \ref{gamma} and \ref{sigma} we conclude the proof of Theorem \ref{HdR}. The first page of the Hodge-to-de Rham spectral sequence is explicitly given by:
\begin{equation}\label{First page of HdR}
\begin{tikzcd}[row sep=tiny,column sep=small]
|[draw]|H^1(\cO_X)\ar[r,"0"] & |[draw]| H^1(\BLb)\\
|[draw]|H^0(\cO_X)\ar[r,"0"] & H^0(\BLb)\ar[r,"\simeq"] & H^0(\bigwedge^2\BLb) \\
& & H^{-1}(\bigwedge^2 \BLb)\ar[r,"\simeq"] & H^{-1}(\bigwedge^3\BLb)\\
& & & H^{-2}(\bigwedge^3\BLb)\ar[r,"\simeq"] & H^{-2}(\bigwedge^4\BLb)\\
& & & & \ddots & \ddots
\end{tikzcd} 
\end{equation}
The boxed terms $H^1(\cO_X)$, $H^1(\BLb_X)$ and $H^0(\cO_X)$ remain unchanged until the $\prescript{\infty}{}{E}$ page.

\section{Negative cyclic homology}\label{Negative cyclic homology}
With Theorem \ref{HdR}, we can compute the negative cyclic homology $\HN_*(X)$.

In \cite{Antieau}, Antieau establishes a decreasing filtration on \(\HN_*(X)\) with graded pieces
\[
\mathrm{gr}^n \HN_{-*}(X) \cong \rR^{*}\Gamma\left(X, \bF_H^n \widehat{\dR}^{\bullet}[2n]\right),
\]  
where \(\bF_H^{\bullet} \widehat{\dR}^{\bullet}\) denotes the Hodge filtration (stupid filtration) of the derived de Rham complex. The computation proceeds in two steps:
\begin{enumerate}
    \item \textbf{Case \(n = 0\):}  
    Here, \(\bF_H^0\widehat{\dR}^{\bullet}[0] \cong \widehat{\dR}^{\bullet}\). By \cite{Bhatt},  
    \[
    \mathrm{gr}^0 \HN_{-*}(X) = \rR^{*}\Gamma(\widehat{\dR}^{\bullet}) \cong H^{*}(X, \mathbb{C}).
    \]  
    This result extends to \(n < 0\) with a degree shift by \(2n\).
    \item \textbf{Case \(n = 1\):}  
    The truncated complex \(\bF_H^1\widehat{\dR}^{\bullet}[2]\) induces a spectral sequence whose \(\prescript{1}{}E\)-page matches columns \(\geq 1\) of the Hodge-to-de Rham spectral sequence in \eqref{First page of HdR}, with identical differentials. The only nontrivial cohomology appears in degree 2, giving  
    \[
    \mathrm{gr}^1 \HN_{-*}(X) = \begin{cases} 
    \mathbb{C} & \text{in degree } 0, \\
    0 & \text{otherwise}.
    \end{cases}
    \]  
    For \(n \geq 2\), \(\bF_H^n\widehat{\dR}^{\bullet}\) similarly concentrates cohomology in degree 2, adjusted by the \(2n\)-degree shift.
\end{enumerate}
To summarize, we have the following chart of dimensions of vector spaces
\begin{table}[H]
\begin{tabular}{c|cccccccccccc}
                          * & -4 & -3 & -2 & -1 & 0 & 1 & 2 & 3 & 4 & 5 & 6 \\
                           \hline
$gr^{-2}\HN_{-*}$ &    &    &    &    &   &   &   &   & 1 & 1 & 1 \\
$gr^{-1}\HN_{-*}$ &    &    &    &    &   &   & 1 & 1 & 1 &   &   \\
$gr^0\HN_{-*}$      &    &    &    &    & 1 & 1 & 1 &   &   &   &   \\
$gr^1\HN_{-*}$      &    &    &    &    & 1  &  &   &   &   &   &   \\
$gr^2\HN_{-*}$      &    &    & 1  &    &   &   &   &   &   &   &  \\
$gr^3\HN_{-*}$      &  1  &    &   &    &   &   &   &   &   &   &  
\end{tabular}
\end{table} 

Here the first row $*$ is the cohomological degree. After switch to homological degree, we conclude that
\begin{theorem}
The negative cyclic homology for projective nodal cubic curve $X$ is given by
\[\HN_n(X)=\begin{cases}
\BC^2 & \text{if}\ n\leq 0 \text{ and even}\\
\BC & \text{if}\ n\leq 0 \text{ and odd}\\
\BC & \text{if}\ n> 0 \text{ and even}\\
0 & \text{otherwise.}
\end{cases} \]
\end{theorem}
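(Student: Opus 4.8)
The plan is to read $\HN_*(X)$ off Antieau's filtration \cite{Antieau}, using the identification of its graded pieces recorded above, $\mathrm{gr}^n\HN_{-*}(X)\cong\rR^*\Gamma(X,\bF_H^n\widehat{\dR}^{\bullet}[2n])$, and then sum dimensions degree by degree. The first task is to have all the graded pieces in hand. For $n\le 0$ one has $\bF_H^n\widehat{\dR}^{\bullet}\simeq\widehat{\dR}^{\bullet}$, so $\mathrm{gr}^n\HN_{-*}(X)\cong H^{*+2n}(X,\BC)$; and $H^{*}(X,\BC)$ is one-dimensional in degrees $0,1,2$ and zero elsewhere, either because the normalization $\widetilde X\cong\BP^1$ merely identifies two points (so $X^{\mathrm{an}}\simeq S^2\vee S^1$) or directly from the $E_\infty$-page of \eqref{First page of HdR} now that Theorem \ref{HdR} is available. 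Thus for $n\le 0$ the piece $\mathrm{gr}^n\HN_{-*}(X)$ is one-dimensional in the three consecutive cohomological degrees $-2n,\,-2n+1,\,-2n+2$. For $n\ge 1$, $\bF_H^n\widehat{\dR}^{\bullet}$ is the stupid truncation $\bigwedge^n\BLb_X\to\bigwedge^{n+1}\BLb_X\to\cdots$, whose hypercohomology is computed by the restriction of the Hodge-to-de Rham spectral sequence \eqref{First page of HdR} to the columns $p\ge n$; this degenerates at $\prescript{2}{}E$ by Theorem \ref{HdR}. On the $\prescript{1}{}E$-page $\alpha$ and $\sigma$ vanish while $\gamma$ and every $\beta_k$ is an isomorphism, so after deleting the columns $<n$ each remaining entry is still cancelled by a partner under one of these isomorphisms, with the sole exception of the entry $\mathbb{H}^{-n+2}(X,\bigwedge^n\BLb_X)\cong\BC$ of column $n$, which in the untruncated sequence would have cancelled against an entry of column $n-1$. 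That survivor lies in total degree $(-n+2)+n=2$, so $\mathrm{gr}^n\HN_{-*}(X)\cong\BC$ concentrated in cohomological degree $2-2n$. This reproduces the chart displayed above.

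Next I would note that these graded pieces cannot interact: the ones with $n\le 0$ live in cohomological degrees $\ge 0$, those with $n\ge 1$ in degrees $\le 0$, each cohomological degree meets at most two of them, and any would-be differential $d_r\colon\mathrm{gr}^n\to\mathrm{gr}^{n+r}$ with $r\ge1$ in the spectral sequence of the filtration raises total cohomological degree by $1$, which a short check on the three families shows cannot happen. So that spectral sequence degenerates at $\prescript{1}{}E$, the induced filtration on each finite-dimensional $\HN_n(X)$ is finite, and $\dim_\BC\HN_n(X)$ equals the sum over all $j$ of the dimension of the cohomological-degree-$(-n)$ part of $\mathrm{gr}^j\HN_{-*}(X)$.

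It then remains to total up the chart and re-grade. An even cohomological degree $m\ge 0$ is hit by $\mathrm{gr}^{-m/2}$ and $\mathrm{gr}^{1-m/2}$, giving dimension $2$; a negative even degree $m$ is hit only by $\mathrm{gr}^{1-m/2}$ (from the $n\ge1$ family), dimension $1$; an odd degree $m\ge 1$ is hit by the single piece $\mathrm{gr}^{(1-m)/2}$, dimension $1$; and a negative odd degree by nothing, dimension $0$. Rewriting in homological degree via $\HN_n(X)=$ (the cohomological-degree-$(-n)$ part) turns these four cases into exactly the statement: $\BC^2$ for $n\le 0$ even, $\BC$ for $n\le 0$ odd, $\BC$ for $n>0$ even, and $0$ otherwise.

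The genuinely substantive ingredient is Theorem \ref{HdR}: it is what makes the truncated spectral sequences degenerate, hence what makes each $\mathrm{gr}^n$ for $n\ge1$ one-dimensional. The one point requiring care is identifying which term of the truncated $\prescript{1}{}E$-page survives and verifying that it always sits in total degree $2$ — that uniform fact is what yields $\mathrm{gr}^n\HN_{-*}(X)\cong\BC$ in degree $2-2n$ and in turn the $2$-step pattern of $\HN_*(X)$. The degeneration of the filtration spectral sequence and the final count are then routine bookkeeping.
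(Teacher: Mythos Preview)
Your proof is correct and follows essentially the same route as the paper: compute the graded pieces of Antieau's filtration via the (truncated) Hodge--to--de Rham spectral sequence using Theorem~\ref{HdR}, and then sum dimensions. You are in fact slightly more careful than the paper in verifying that the filtration spectral sequence on $\HN$ itself degenerates; one small slip is that for $n=1$ the survivor $H^1(L_X)$ does \emph{not} cancel in the untruncated sequence either (it receives only $\sigma=0$ from column $0$), but this does not affect your correct identification of the lone surviving term in total degree~$2$.
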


\section{General cases}\label{General cases}
In this section, we study the general case. Let \(Y\) be a projective curve of genus \(g\) with \(n\) singular points \(P_1, \ldots, P_n\), and let \(\pi : \widetilde{Y} \to Y\) denote its normalization. One easily verifies that \(\widetilde{Y}\) is a smooth projective curve of genus \(g-n\). Denote by \(L_Y\) the cotangent sheaf of \(Y\) and by \(\BLb_Y\) its cotangent complex. We then have the following result.
\begin{lemma}
The cohomology of $L_Y$ is given by
\[H^i(Y,L_Y)=\begin{cases}
\BC^g & i=0, \\
\BC & i=1, \\
0 & \mathrm{otherwise}.
\end{cases}.\] 
\end{lemma}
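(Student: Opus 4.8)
The plan is to run the argument of Lemma~\ref{sigma} simultaneously at all $n$ nodes. First I would establish the short exact sequence of sheaves on $Y$
\[
0 \to \bigoplus_{i=1}^{n} \cO_{P_i} \to L_Y \to \pi_*\Omega^1_{\widetilde{Y}} \to 0,
\]
where $\cO_{P_i}$ is the skyscraper sheaf at the node $P_i$. The map $L_Y \to \pi_*\Omega^1_{\widetilde{Y}}$ is an isomorphism away from the nodes, so this is a purely local claim at each $P_i$, and there it is the verbatim computation from Lemma~\ref{sigma}: on a neighbourhood with $\cO_Y \cong \BC[x,y]/(xy)$ the map sends $dx \mapsto (ds,0)$ and $dy \mapsto (0,dt)$, it is surjective, and its kernel is generated by $x\,dy$, which spans a one-dimensional space supported at the node since $x^2\,dy = -xy\,dx = 0$ in $L_Y$. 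Assembling the local pictures gives the displayed sequence.

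Next I would take the long exact cohomology sequence on $Y$ and plug in the ingredients. The skyscraper $\bigoplus_i \cO_{P_i}$ has $H^0 = \BC^n$ and vanishing higher cohomology. Since $\pi$ is finite, hence affine, $H^\bullet(Y, \pi_*\Omega^1_{\widetilde{Y}}) = H^\bullet(\widetilde{Y}, \Omega^1_{\widetilde{Y}})$; and as $\widetilde{Y}$ is a smooth projective (connected) curve of genus $g-n$, we have $H^0(\widetilde{Y}, \Omega^1_{\widetilde{Y}}) = \BC^{g-n}$, $H^1(\widetilde{Y}, \Omega^1_{\widetilde{Y}}) = \BC$ by Serre duality, and $H^i = 0$ for $i \ge 2$ because $\widetilde Y$ is a curve. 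Using that $H^1$ and $H^2$ of the skyscraper vanish, the long exact sequence breaks up into
\[
0 \to \BC^{n} \to H^0(Y, L_Y) \to \BC^{g-n} \to 0, \qquad 0 \to H^1(Y, L_Y) \to \BC \to 0,
\]
together with $H^i(Y, L_Y) = 0$ for $i \ge 2$ (and trivially for $i<0$).

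Reading this off yields $\dim H^0(Y, L_Y) = n + (g-n) = g$, $H^1(Y, L_Y) \cong \BC$, and vanishing in all other degrees, which is exactly the claim. I do not expect a real obstacle here: the only points requiring a word of care are that "genus $g-n$" presupposes $\widetilde Y$ connected (equivalently $Y$ irreducible), which is the standing hypothesis of the section, and that the local ring at each singular point is genuinely $\BC[x,y]/(xy)$ so that the Lemma~\ref{sigma} computation transports verbatim; both are immediate. One could instead work from the torsion/torsion-free sequence $0 \to \bigoplus_i \cO_{P_i} \to L_Y \to \cG \to 0$, as was done for $L_X$ in Section~\ref{Hochschild homology}, combined with a Riemann--Roch computation of $\chi(Y, L_Y)$, but routing through $\pi_*\Omega^1_{\widetilde{Y}}$ avoids having to identify the torsion-free quotient $\cG$ explicitly.
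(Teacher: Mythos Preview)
Your proposal is correct and follows essentially the same route as the paper: both establish the short exact sequence $0 \to \bigoplus_{i=1}^n \cO_{P_i} \to L_Y \to \pi_*\Omega^1_{\widetilde{Y}} \to 0$ by the local computation of Lemma~\ref{sigma}, then read off $H^0$ and $H^1$ from the long exact sequence together with Serre duality on the genus $g-n$ curve $\widetilde{Y}$. Your write-up is in fact slightly more detailed, explicitly invoking affineness of $\pi$ to identify $H^\bullet(Y,\pi_*\Omega^1_{\widetilde{Y}})$ with $H^\bullet(\widetilde{Y},\Omega^1_{\widetilde{Y}})$ and flagging the implicit irreducibility assumption.
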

\begin{proof}
	We claim that $L_Y$ fits into the short exact sequence
\[0\to \bigoplus_{i=1}^n \cO_{P_i} \to L_Y \to \pi_*\Omega_{\widetilde{Y}}\to 0, \]
where $\cO_{P_i}$ denotes the skyscraper sheaf at $P_i$, $\Omega_{\widetilde{Y}}$ is the cotangent bundle on $\widetilde{Y}$. The proof follows from a local calculation similar to that in Lemma \ref{sigma}.

This short exact sequence induces a long exact sequence in cohomology:
\[0\to H^0\Bigl(\bigoplus_{i=1}^n \cO_{P_i}\Bigr) \to H^0(L_Y) \to H^0(\widetilde{Y},\Omega_{\widetilde{Y}})\to 0\to H^1(L_Y)\to H^1(\widetilde{Y},\Omega_{\widetilde{Y}})\to 0, \]
since \(H^1\Bigl(\bigoplus_{i=1}^n \cO_{P_i}\Bigr)=0\). By Serre duality, 
\[
H^1(\widetilde{Y},\Omega_{\widetilde{Y}})=H^0(\widetilde{Y},\cO_{\widetilde{Y}})=\BC,\quad H^0(\widetilde{Y},\Omega_{\widetilde{Y}})=\BC^{g-n},
\]
substituting these into the long exact sequence yields \(H^0(L_Y)=\BC^g\) and \(H^1(L_Y)=\BC\).
\end{proof}

Next, we study the higher derived wedge powers of the cotangent complex \(\BLb_Y\). Notice that for \(p \geq 2\), the sheaf \(\bigwedge^p \BLb_Y\) is supported at the nodal singularities \(P_i\) and can be computed locally. Hence, we have
\[
\bigwedge^p \BLb_Y \cong \bigoplus_{i=1}^n \bigwedge^p\BLb_Y\vert_{Z_i} \cong \bigoplus_{i=1}^n \bigwedge^p \BLb_X,
\]
where \(Z_i\) is a formal neighborhood of \(P_i\). This idea is also used in the proof of Lemma \ref{gamma}.

\begin{lemma}
    The hypercohomology of \(\bigwedge^k\BLb_X\) is given by
    \[
    \BH^i\Bigl(\bigwedge^k\BLb_X\Bigr)=
    \begin{cases}
    \BC^n, & \text{if } i = -k+2 \text{ or } i = -k+1, \\
    0, & \text{otherwise}.
    \end{cases}
    \]
\end{lemma}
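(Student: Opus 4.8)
The plan is to reduce the computation to the local contribution at each node and then quote Corollary \ref{hypercohomology}. Throughout I take $k\ge 2$, the range in which the derived exterior power is concentrated at the singular locus, so the statement to prove reads $\BH^i\bigl(\bigwedge^k\BLb_Y\bigr)\cong\BH^i\bigl(\bigoplus_{i=1}^n\bigwedge^k\BLb_X\bigr)=\BC^n$ for $i\in\{-k+2,\,-k+1\}$ and $0$ otherwise.

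First I would record that for $k\ge 2$ every cohomology sheaf of $\bigwedge^k\BLb_Y$ is supported on $\{P_1,\dots,P_n\}$: away from the nodes $Y$ is smooth of dimension $1$, so there $\BLb_Y$ is quasi-isomorphic to a line bundle placed in degree $0$, whose $k$-th derived exterior power vanishes. Running the hypercohomology spectral sequence $E_2^{p,q}=H^p\bigl(Y,\cH^q(\bigwedge^k\BLb_Y)\bigr)\Rightarrow \BH^{p+q}\bigl(Y,\bigwedge^k\BLb_Y\bigr)$ and using that a coherent sheaf supported at finitely many closed points has vanishing higher cohomology, the spectral sequence degenerates and
\[
\BH^i\Bigl(Y,\bigwedge^k\BLb_Y\Bigr)\;\cong\;\bigoplus_{j=1}^n \cH^i\Bigl(\bigwedge^k\BLb_Y\Bigr)_{P_j}.
\]
This is precisely the decomposition $\bigwedge^k\BLb_Y\cong\bigoplus_{j=1}^n\bigwedge^k\BLb_Y\vert_{Z_j}$ displayed before the statement.

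Next I would identify each local piece with the local model at the node of the nodal cubic $X$. Each $P_j$ is formally isomorphic to $\Spec\BC[[x,y]]/(xy)$, and the cotangent complex --- hence each of its derived wedge powers, which are computed termwise on a flat resolution --- is insensitive to étale or formal localization (the same fact \cite[Proposition 3.9]{Perez} used in Lemma \ref{gamma}). Therefore $\bigwedge^k\BLb_Y\vert_{Z_j}$ is quasi-isomorphic to $\bigwedge^k\BLb_X$ restricted to a neighborhood of the unique node of $X$. Since for the nodal cubic the complex $\bigwedge^k\BLb_X$ is itself already supported at its single node (Lemma \ref{wedge2} and Corollary \ref{hypercohomology}), this local contribution equals $\BH^i\bigl(X,\bigwedge^k\BLb_X\bigr)$, which by Corollary \ref{hypercohomology} is $\BC$ in degrees $-k+2$ and $-k+1$ and $0$ in all other degrees.

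Summing the $n$ identical local contributions then gives $\BC^n$ in degrees $-k+2$ and $-k+1$ and $0$ elsewhere, as claimed. The only genuinely substantive input --- as opposed to bookkeeping --- is the invariance of $\BLb$ under formal/étale localization together with the resulting splitting of $R\Gamma$ over the nodes; once these are in hand the result is a direct transcription of the $(g,n)=(1,1)$ computations of Sections \ref{Hochschild homology} and \ref{Hodge to de Rham spectral sequence}, and I do not anticipate any obstacle beyond keeping the degree shifts straight.
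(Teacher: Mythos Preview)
Your proposal is correct and follows essentially the same route as the paper: the paper records the decomposition $\bigwedge^p\BLb_Y\cong\bigoplus_{i=1}^n\bigwedge^p\BLb_Y\vert_{Z_i}\cong\bigoplus_{i=1}^n\bigwedge^p\BLb_X$ in the paragraph preceding the lemma and then its one-line proof simply invokes this together with Corollary~\ref{hypercohomology}. Your write-up expands the justification for that decomposition (support at the nodes, degeneration of the hypercohomology spectral sequence, formal-local invariance of $\BLb$) but the logical content is identical.
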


\begin{proof}
    This follows directly from the above claim and Corollary \ref{hypercohomology}.
\end{proof}

We now study the Hodge-to-de Rham spectral sequence for \(Y\).

\begin{theorem}
    The Hodge-to-de Rham spectral sequence for \(Y\) degenerates at the \(\prescript{2}{}E\)-page.
\end{theorem}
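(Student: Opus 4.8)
The plan is to reproduce the four-step proof of Theorem~\ref{HdR} for the nodal cubic, now tracking the multiplicity $n$. First I would write out the $\prescript{1}{}E$-page from the lemmas above: column $p=0$ contributes $H^0(Y,\cO_Y)=\BC$ and $H^1(Y,\cO_Y)=\BC^g$; column $p=1$ contributes $H^0(Y,L_Y)=\BC^g$ and $H^1(Y,L_Y)=\BC$, with nothing in degree $-1$ since $Y$ is a reduced nodal, hence lci, curve, so $\cH^{-1}(\BLb_Y)=0$ and $\BLb_Y\simeq L_Y$; and for every $p\ge2$ the sheaf $\bigwedge^p\BLb_Y$ is supported at the $n$ nodes with $\BH^{-p+2}=\BH^{-p+1}=\BC^n$ and all other hypercohomology zero. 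Consequently the only possibly-nonzero $d_1$-differentials are $\alpha\colon\BC\to\BC^g$ in row $q=0$, $\gamma\colon\BC^g\to\BC^n$ from $p=1$ to $p=2$ in row $q=0$, $\sigma\colon\BC^g\to\BC$ in row $q=1$, and $\beta_k\colon\BC^n\to\BC^n$ from $p=k+1$ to $p=k+2$ in row $q=-k$ for $k\ge1$. The goal is then to prove: (1) $\alpha=0$; (2) every $\beta_k$ is an isomorphism; (3) $\gamma$ is surjective; (4) $\sigma=0$.

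Steps (1), (2) and (4) generalize the cubic case almost verbatim. For (1), the $0$-diagonal of $\prescript{1}{}E$ contains only $H^0(Y,\cO_Y)=\BC$, which must survive to $\prescript{\infty}{}E$ since the spectral sequence converges to $H^0_{\mathrm{sing}}(Y,\BC)=\BC$, exactly as in Lemma~\ref{alpha}. For (2), since $\bigwedge^{k+1}\BLb_Y$ is supported at the nodes, both this complex and the de Rham differential $\beta_k$ split as direct sums over the $n$ nodes, and each node of $Y$ is formally isomorphic to the node of $X$ (both are $\Spec\BC[[x,y]]/(xy)$); hence $\beta_k$ is $n$ copies of the isomorphism of Lemma~\ref{beta}. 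For (4), I would run Antieau's normalization argument from Lemma~\ref{sigma}: the short exact sequence $0\to\bigoplus_{i=1}^n\cO_{P_i}\to L_Y\to\pi_*\Omega_{\widetilde Y}\to0$ gives the isomorphism $H^1(Y,L_Y)\xrightarrow{\sim}H^1(\widetilde Y,\Omega_{\widetilde Y})$, realized by the right edge of the commutative square induced by $\pi$ whose bottom arrow $H^1(\widetilde Y,\cO_{\widetilde Y})\to H^1(\widetilde Y,\Omega_{\widetilde Y})$ is a $d_1$ of the Hodge-to-de Rham spectral sequence of the \emph{smooth} projective curve $\widetilde Y$, hence $0$; since the right vertical map is injective, $\sigma=0$. (The cubic proof exploited the accident $H^1(\BP^1,\cO)=0$; the correct general input is instead the classical $\prescript{1}{}E$-degeneration for the smooth curve $\widetilde Y$.)

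The genuinely new point, and the step I expect to be the main obstacle, is (3). Unlike the cubic, $\gamma$ cannot be an isomorphism: $\dim H^0(Y,L_Y)=g$ whereas $\dim\BH^0(Y,\bigwedge^2\BLb_Y)=n$, and typically $g>n$. To prove surjectivity I would restrict $\gamma$ to the torsion subsheaf $\bigoplus_{i=1}^n\cO_{P_i}\subset L_Y$ (the kernel in the sequence above), which contributes a subspace $\BC^n\hookrightarrow H^0(Y,L_Y)$. Since both $\bigoplus_i\cO_{P_i}$ and $\bigwedge^2\BLb_Y$ are supported at the nodes, the restriction of $\gamma$ to this $\BC^n$ decomposes over the nodes, and at $P_i$ it is exactly the local map $H^0(\cO_{P_i})\to\BH^0(\bigwedge^2\BLb_X)$ shown to be an isomorphism in the proof of Lemma~\ref{gamma}; so $\gamma$ already carries this $\BC^n$ isomorphically onto $\BH^0(Y,\bigwedge^2\BLb_Y)$, whence $\gamma$ is surjective and $\coker\gamma=0$.

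Granting (1)--(4), the $\prescript{2}{}E$-page is concentrated in columns $p=0,1$, its only nonzero terms being $\prescript{2}{}E^{0,0}=\BC$, $\prescript{2}{}E^{0,1}=\BC^g$, $\prescript{2}{}E^{1,0}=\ker\gamma\cong\BC^{g-n}$, and $\prescript{2}{}E^{1,1}=\BC$. Every differential $d_r$ with $r\ge2$ raises $p$ by at least $2$ and hence lands in a zero column, so $\prescript{2}{}E=\prescript{\infty}{}E$, as claimed. As a consistency check, the anti-diagonal totals $1$, $g+(g-n)=2g-n$, $1$ in degrees $0,1,2$ match $\dim_\BC H^m_{\mathrm{sing}}(Y,\BC)$ (computable from $\chi_{\mathrm{top}}(Y)=2-2g+n$). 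The only technical point requiring care throughout is the justification that de Rham differentials between sheaves supported at the nodes split as direct sums over the nodes and agree node-by-node with the local model already analyzed for $X$; modulo that, the argument is a multiplicity-bookkeeping upgrade of Theorem~\ref{HdR}.
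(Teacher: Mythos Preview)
Your argument is correct, and Steps (1)--(3) coincide with the paper's. The one substantive difference is Step (4): rather than generalizing the normalization argument of Lemma~\ref{sigma} as you do, the paper argues by convergence. Having established (2) and (3), every term on the $2$-diagonal with $p\ge2$ already vanishes on $\prescript{2}{}E$, leaving only $\prescript{2}{}E^{1,1}=\coker\sigma$; since no higher differential touches $(1,1)$ and the abutment is $H^2_{\mathrm{sing}}(Y,\BC)=\BC$, this forces $\coker\sigma=\BC$, i.e.\ $\sigma=0$. Your route has the merit of being logically independent of Steps (2)--(3) and of exhibiting $\sigma=0$ as a direct consequence of classical $\prescript{1}{}E$-degeneration on the smooth curve $\widetilde Y$ (the cubic case got this for free from $H^1(\BP^1,\cO)=0$, which is why a new idea is needed here). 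The paper's route is slightly cheaper in that it avoids invoking degeneration for $\widetilde Y$ and the functoriality of the spectral sequence under $\pi$, at the price of making Step (4) depend on the earlier local computations.
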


\begin{proof}
    We begin by writing a few terms of the \(\prescript{1}{}E\)-page for \(Y\):
    \begin{equation}\label{HdR spectral seq for Y}
        \begin{tikzcd}[row sep=tiny,column sep=small]
            1 & H^1(\cO_Y)\ar[r,"\sigma"] & H^1(L_Y)\\[1mm]
            0 & H^0(\cO_Y)\ar[r,"\alpha"] & H^0(L_Y)\ar[r,"\gamma"] & H^0\bigl(\bigwedge^2\BLb_Y\bigr) \\[1mm]
            -1 & & & H^{-1}\bigl(\bigwedge^2 \BLb_Y\bigr)\ar[r,"\beta_1"] & H^{-1}\bigl(\bigwedge^3\BLb_Y\bigr)\\[1mm]
            -2 & & & & H^{-2}\bigl(\bigwedge^3\BLb_Y\bigr)\ar[r,"\beta_2"] & H^{-2}\bigl(\bigwedge^4\BLb_Y\bigr)\\[1mm]
            \vdots& & & & & \ddots & \ddots
        \end{tikzcd} 
    \end{equation}
    
    Our proof proceeds in four steps:
    \begin{itemize}
        \item[Step 1:] \(\alpha=0\).
        \item[Step 2:] \(\beta_k\) is an isomorphism for all \(k\geq 1\).
        \item[Step 3:] \(\gamma\) is surjective.
        \item[Step 4:] \(\sigma=0\).
    \end{itemize}
    
    \begin{enumerate}
        \item \textbf{Step 1:} The spectral sequence converges at the \(\prescript{\infty}{}E\)-page to the singular cohomology \(H^*(Y)\), where
        \[
        H^i(Y)=
        \begin{cases}
        \BC & i=0,2,\\[1mm]
        \BC^{2g-n} & i=1,\\[1mm]
        0 & \text{otherwise}.
        \end{cases}
        \]
        Since the only term on the 0-th diagonal must survive, no nonzero differential can originate from \(H^0(\cO_Y)\), so \(\alpha=0\).
        
        \item \textbf{Step 2:} This step follows directly from our computation of \(\bigwedge^p\BLb_Y\) and Lemma \ref{beta}.
        
        \item \textbf{Step 3:} A computation similar to that in Lemma \ref{gamma} shows that the following diagram
        \[
        \begin{tikzcd}
        H^0\Bigl(\bigoplus_{i=1}^n\cO_{P_i}\Bigr)\ar[r]\ar[d,equal] & H^0(L_Y)\ar[r,"\gamma"]\ar[d] & H^0\Bigl(\bigwedge^2\BLb_Y\Bigr)\ar[d,"\simeq"{anchor=south,rotate=-90}]\\[1mm]
        H^0\Bigl(\bigoplus_{i=1}^n\cO_{P_i}\Bigr)\ar[r]\ar[d,equal] & H^0\Bigl(L_Y\vert_{U}\Bigr)\ar[r]\ar[d,"\simeq"{anchor=south,rotate=-90}] & H^0\Bigl(\bigwedge^2 \BLb_Y\vert_{U}\Bigr)\ar[d,"\simeq"{anchor=south,rotate=-90}]\\[1mm]
        H^0\Bigl(\bigoplus_{i=1}^n\cO_{P_i}\Bigr)\ar[r]\ar[d,equal] & H^0\Bigl(L_Z\vert_{V}\Bigr)\ar[r] & H^0\Bigl(\bigwedge^2 \BLb_Z\vert_{V}\Bigr)\\[1mm]
        H^0\Bigl(\bigoplus_{i=1}^n\cO_{P_i}\Bigr)\ar[r] & H^0(L_Z)\ar[r]\ar[u] & H^0\Bigl(\bigwedge^2 \BLb_Z\Bigr)\ar[u,"\simeq"{anchor=south,rotate=-90}]
        \end{tikzcd}
        \]
        commutes, where \(Z\) is a disjoint union of \(n\) copies of the affine scheme \(\Spec \BC[x,y]/(xy)\), and 
        \[
        U = \bigsqcup_{i=1}^n U_i,\quad V = \bigsqcup_{i=1}^n V_i
        \]
        are disjoint unions of formal open neighborhoods of nodes \(P_i\) in \(X\) and \(Z\), respectively. Since \(Z\) consists of \(n\) copies of a crossing of two lines, each irreducible component can be computed as in Lemma \ref{gamma}. Hence, the final row is surjective, and so \(\gamma: H^0(L_Y) \to H^0\bigl(\bigwedge^2\BLb_Y\bigr)\) is surjective.
        
        \item \textbf{Step 4:} Consider the following diagram displaying the ranks of the groups in the first two rows:
        \[
        \begin{tikzcd}[row sep=tiny,column sep=small]
        1 & \BC^g \ar[r,"\sigma"] & \BC\\[1mm]
        0 & \BC \ar[r,"\alpha=0"] & \BC^g \ar[r,twoheadrightarrow,"\gamma"] & \BC^n 
        \end{tikzcd}
        \]
        All other terms vanish on the \(\prescript{2}{}E\)-page, so the only nonzero term on the second diagonal is \(H^1(L_Y) \cong \BC\). Since \(H^2(Y)=\BC\), it follows that on the final page there is exactly one nonzero term in the second diagonal. Consequently, \(\sigma\) must be the zero map.
    \end{enumerate}
    
    Combining these four steps, we conclude that the Hodge-to-de Rham spectral sequence for any nodal curve \(Y\) degenerates at the \(\prescript{2}{}E\)-page. In particular, the \(\prescript{2}{}E\)-page (also the $\prescript{\infty}{}E$-page) is given by
    \[
    \begin{tikzcd}[row sep=tiny,column sep=small]
    1 & \BC^g & \BC\\[1mm]
    0 & \BC & \BC^{g-n} & 0 \\[1mm]
    -1 & & & 0  & 0\\[1mm]
    -2 & & & & 0 & 0\\[1mm]
    \vdots& & & & & \ddots & \ddots
    \end{tikzcd}
    \]
\end{proof}

We also compute the Hochschild homology and negative cyclic homology for \(Y\).

\begin{theorem}
    The Hochschild homology of \(Y\) is 
    \[
    \HH_i(Y)=
    \begin{cases}
        \BC^g & \text{if } i=-1,1,\\[1mm]
        \BC^2 & \text{if } i=0,\\[1mm]
        \BC^n & \text{if } i\geq 2,\\[1mm]
        0 & \text{otherwise},
    \end{cases}
    \]
    and the negative cyclic homology of \(Y\) is
    \[
    \HN_i(Y)=
    \begin{cases}
        \BC^2 & \text{if } i\leq 0 \text{ and even},\\[1mm]
        \BC^{2g-n} & \text{if } i<0 \text{ and odd},\\[1mm]
        \BC^{g-n} & \text{if } i=1,\\[1mm]
        \BC^n & \text{if } i>0 \text{ and even},\\[1mm]
        0 & \text{otherwise}.
    \end{cases}
    \]
\end{theorem}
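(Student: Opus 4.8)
The plan is to combine three ingredients already in place --- the Buchweitz--Flenner decomposition $\mathsf{HH}_Y\simeq\Sym^\bullet(\BLb_Y[1])$, Antieau's Hodge filtration on negative cyclic homology, and the degeneration of the Hodge-to-de Rham spectral sequence for $Y$ just proved --- and to apply them to the cohomology computations for $\cO_Y$, $L_Y$ and $\bigwedge^k\BLb_Y$ obtained above.

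\textbf{Hochschild homology.} Taking hypercohomology of the Buchweitz--Flenner decomposition gives
\[
\HH_j(Y)\;\cong\;\bigoplus_{k\ge 0}\BH^{k-j}\!\Bigl(Y,\bigwedge^k\BLb_Y\Bigr),
\]
and I would read off each summand. The term $k=0$ contributes $H^{-j}(Y,\cO_Y)$, which is $\BC$ at $j=0$ and $\BC^g$ at $j=-1$ (arithmetic genus $g$); the term $k=1$ contributes $H^{1-j}(Y,L_Y)$, which is $\BC$ at $j=0$ and $\BC^g$ at $j=1$; and for $k\ge 2$ the term $\BH^{k-j}(Y,\bigwedge^k\BLb_Y)$ equals $\BC^n$ exactly when $k-j\in\{-k+1,-k+2\}$, i.e.\ $j\in\{2k-2,2k-1\}$, by Corollary \ref{hypercohomology} together with the local decomposition $\bigwedge^k\BLb_Y\cong\bigoplus_{i=1}^n\bigwedge^k\BLb_X$. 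For each fixed $j\ge 2$ exactly one value of $k$ contributes, giving $\BC^n$. Assembling degree by degree yields $\HH_*(Y)$; this part is pure bookkeeping.

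\textbf{Negative cyclic homology.} I would compute the graded pieces $\gr^j\HN_{-*}(Y)\cong\rR^*\Gamma(Y,\bF_H^j\widehat{\dR}^\bullet[2j])$ one at a time. For $j\le 0$ the Hodge filtration is trivial, $\bF_H^j\widehat{\dR}^\bullet=\widehat{\dR}^\bullet$, so $\gr^j\HN_{-*}(Y)=H^{*+2j}(Y,\BC)$, pinned down by the singular cohomology $H^0(Y)=H^2(Y)=\BC$, $H^1(Y)=\BC^{2g-n}$ recorded in the degeneration proof. For $j\ge 1$ the stupid filtration on $\bF_H^j\widehat{\dR}^\bullet$ yields a spectral sequence whose $\prescript{1}{}E$-page is the part of \eqref{HdR spectral seq for Y} in columns $p\ge j$, with the same differentials. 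Since $\bigwedge^p\BLb_Y$ is concentrated in hypercohomological degrees $-p+1$ and $-p+2$, in each row $q\le -1$ the two surviving entries (columns $1-q$ and $2-q$) are linked by the isomorphism $\beta_{-q}$ of Lemma \ref{beta} and cancel on $\prescript{2}{}E$, while the unpaired entry at $(j,-j+2)$, of total degree $2$, persists and admits no higher differential. For $j=1$ the extra column $p=1$ contributes $\gamma\colon H^0(L_Y)\to H^0(\bigwedge^2\BLb_Y)$ in row $q=0$ and $H^1(L_Y)$ in row $q=1$; here $\gamma$ is only surjective --- with kernel $\BC^{g-n}$ --- rather than an isomorphism as for the nodal cubic, and $\BH^1(\bigwedge^2\BLb_Y)=0$, so the $\prescript{\infty}{}E$-page retains $\BC^{g-n}$ in total degree $1$ and $\BC$ in total degree $2$. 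Applying the shift $[2j]$ then gives $\gr^j\HN_{-*}(Y)$: for $j\ge 2$ it is $\BC^n$ concentrated in homological degree $2j-2$; for $j=1$ it is $\BC$ in degree $0$ and $\BC^{g-n}$ in degree $1$. Summing over $j$ in each homological degree --- the filtration being complete, so dimensions add --- produces the stated $\HN_*(Y)$.

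\textbf{Main obstacle.} The routine parts are the Hochschild bookkeeping and the $j\le 0$ graded pieces. The delicate point is the $j\ge 1$ analysis: one must check that passing to columns $p\ge j$ of \eqref{HdR spectral seq for Y} removes exactly the paired $\beta_k$-cancellations, that for $j=1$ the map $\gamma$ must be used through its (vanishing) cokernel and its nontrivial kernel $\BC^{g-n}$, and that no new higher differentials appear in the truncated complexes. The surviving kernel of $\gamma$ is precisely what produces the extra value $\HN_1(Y)=\BC^{g-n}$, which is absent in the $(g,n)=(1,1)$ case.
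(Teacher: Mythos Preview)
Your proposal is correct and follows essentially the same approach as the paper: the Hochschild computation via the Buchweitz--Flenner/HKR decomposition and the negative cyclic computation via Antieau's filtration, reading off the graded pieces from the truncations $\bF_H^j\widehat{\dR}^\bullet$ of the Hodge-to-de Rham spectral sequence. Your treatment is in fact slightly more explicit than the paper's in locating the single surviving entry at $(j,-j+2)$ for $j\ge 2$, in tracking the kernel $\BC^{g-n}$ of $\gamma$ for $j=1$, and in noting why no higher differentials can appear in the truncated complexes; the paper records these outcomes but leaves the verification to the reader.
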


\begin{proof}
    The computation of \(\HH_*(Y)\) follows directly from the HKR isomorphism \cite{Buchweitz-Flenner}:
    \[
    \HH_i(Y) \simeq \prod_{q-p=i} H^p\Bigl(Y,\bigwedge^q\BLb_Y\Bigr).
    \]
    To compute \(\HN_*(Y)\), we mimic the discussion in Section \ref{Negative cyclic homology} by studying the decreasing filtration on \(\HN_*(Y)\). The graded pieces are given by
    \[
    \mathrm{gr}^k \HN_{-*}(Y) \cong \rR^*\Gamma\Bigl(Y,\bF_H^k \widehat{\dR}^{\bullet}[2k]\Bigr),
    \]
    where \(\bF_H^{\bullet}\widehat{\dR}^{\bullet}\) denotes the Hodge (stupid) filtration of the derived de Rham complex \(\widehat{\dR}^{\bullet}\). We now compute \(\HN_*(Y)\) in three steps:
    \begin{enumerate}
        \item \textbf{Step 1:} \(k=0\). Since \(\bF_H^0\widehat{\dR}^{\bullet}[0]\cong \widehat{\dR}^{\bullet}\), it follows from \cite{Bhatt} that
        \[
        \mathrm{gr}^0\HN_{-*}(Y)=\rR^*\Gamma(\widehat{\dR}^{\bullet})\cong H^*(Y,\BC).
        \]
        This argument generalizes to any \(k<0\) with a shift in degree by \(2k\).
        
        \item \textbf{Step 2:} \(k=1\). In this case, we compute the hypercohomology of the truncated complex \(\bF_H^1(\widehat{\dR}^{\bullet}[2])\). The Hodge filtration on \(\bF_H^1(\widehat{\dR}^{\bullet})\) induces a spectral sequence whose first page is identical to the first page of the Hodge-to-de Rham spectral sequence \eqref{HdR spectral seq for Y} for \(Y\), but with only columns \(\geq 1\). Moreover, the differentials in this spectral sequence are the same as those in \eqref{HdR spectral seq for Y}. It is then straightforward to verify that the only nontrivial cohomology groups are \(\BC^{g-n}\) in degree 1 and \(\BC\) in degree 2. Therefore,
        \[
        \mathrm{gr}^1\HN_{-*}(Y)=
        \begin{cases}
            \BC & \text{in degree } 0,\\[1mm]
            \BC^{g-n} & \text{in degree -1},\\[1mm]
            0 & \text{in all other degrees}.
        \end{cases}
        \]
        
        \item \textbf{Step 3:} \(k=2\). Here, we examine the spectral sequence \eqref{HdR spectral seq for Y} considering only the columns \(\geq 2\). One finds that the only nontrivial cohomology is \(\BC^n\) in degree 2, so
        \[
        \mathrm{gr}^2\HN_{-*}(Y)=
        \begin{cases}
            \BC^n & \text{in degree -2},\\[1mm]
            0 & \text{in all other degrees}.
        \end{cases}
        \]
        This argument generalizes to any \(k\geq 3\) since each \(\bF_H^k\widehat{\dR}^{\bullet}\) has nontrivial cohomology only in degree 2, with an appropriate \(2k\) degree shift.
    \end{enumerate}
    To summarize, we obtain the following chart of the dimensions of the graded pieces:
    \begin{table}[H]
    \centering
    \begin{tabular}{c|cccccccccccc}
        * & -4 & -3 & -2 & -1 & 0 & 1 & 2 & 3 & 4 & 5 & 6 \\ \hline
        \(gr^{-2}\HN_{-*}\) &    &    &    &    &   &   &   &   & 1 & \(2g-n\) & 1 \\
        \(gr^{-1}\HN_{-*}\) &    &    &    &    &   &   & 1 & \(2g-n\) & 1 &   &   \\
        \(gr^{0}\HN_{-*}\)  &    &    &    &    & 1 & \(2g-n\) & 1 &   &   &   &   \\
        \(gr^{1}\HN_{-*}\)  &    &    &    & \(g-n\)  & 1  &   &   &   &   &   &   \\
        \(gr^{2}\HN_{-*}\)  &    &    & \(n\)  &    &   &   &   &   &   &   &  \\
        \(gr^{3}\HN_{-*}\)  & \(n\)  &    &   &    &   &   &   &   &   &   &  
    \end{tabular}
    \end{table}
    
    The first row (labeled \( * \)) indicates the cohomological degree. After switching to homological degree, we obtain the stated result.
\end{proof}

\section{Liftable classes}\label{Liftable classes}

In this section, we study the natural map \(\HN_* \to \HH_*\). Before doing so, we introduce the \emph{Hochschild-to-cyclic spectral sequence}. Recall that for any variety \(W\), there exists a spectral sequence whose first page is given by
\[
\begin{tikzcd}[row sep=small]
& \vdots & \vdots & \vdots \\
\cdots & 0 \ar[r] & HH_{-1}\ar[r,"uB"] & uHH_0\ar[r,"uB"] & \cdots \\
\cdots & 0 \ar[r] & HH_0\ar[r,"uB"] & uHH_1\ar[r,"uB"] & \cdots \\
\cdots & 0 \ar[r] & HH_1\ar[r,"uB"] & uHH_2\ar[r,"uB"] & \cdots \\
& \vdots & \vdots & \vdots
\end{tikzcd}
\]
where \(u\) is a formal variable of homological degree \(-2\) and \(B\) is Connes's operator. This spectral sequence converges at the \(\prescript{\infty}{}E\)-page to the negative cyclic homology \(\HN_*(W)\).

\begin{theorem}
    The Hochschild-to-cyclic spectral sequence for the nodal cubic curve \(X\) degenerates at the \(\prescript{2}{}E\)-page.
\end{theorem}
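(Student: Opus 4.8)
The plan is to compute the $\prescript{2}{}E$-page of this spectral sequence explicitly and then to notice that it already agrees, degree by degree, with its abutment $\HN_*(X)$, leaving no room for higher differentials.

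The first step is to describe the differential $d_1 = uB$ on the $\prescript{1}{}E$-page. By the HKR decomposition of \cite{Buchweitz-Flenner} used in Section \ref{Hochschild homology}, one has $\HH_m(X)\cong\bigoplus_{q}\mathbb{H}^{q-m}\!\bigl(X,\bigwedge^q\BLb_X\bigr)$, and under this identification Connes' operator $B$ is the de Rham differential $\bigwedge^q\BLb_X\to\bigwedge^{q+1}\BLb_X$ (see \cite{Antieau}); in other words, under HKR the operator $B$ is the $d_1$ of the Hodge-to-de Rham spectral sequence. Consequently the complex $\cdots\to\HH_{m-1}(X)\xrightarrow{B}\HH_m(X)\xrightarrow{B}\HH_{m+1}(X)\to\cdots$ decomposes as the direct sum of the rows of the $\prescript{1}{}E$-page \eqref{First page of HdR}, the entry of bidegree $(\omega,\rho)$ there being placed in Hochschild degree $\omega-\rho$. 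Since Theorem \ref{HdR} identifies the $\prescript{2}{}E$-page of the Hodge-to-de Rham spectral sequence with its $\prescript{\infty}{}E$-page, whose only nonzero groups are the boxed terms $H^0(\cO_X)$, $H^1(\cO_X)$, $H^1(\BLb_X)$, I can read off the homology of this $B$-complex: it is $\BC$ in degree $-1$, $\BC^2$ in degree $0$, and $0$ otherwise. Equivalently, $B_n\colon\HH_n(X)\to\HH_{n+1}(X)$ vanishes for $n\le 0$, is an isomorphism for odd $n\ge 1$ (it is one of the maps $\gamma,\beta_1,\beta_2,\dots$ of \eqref{First page of HdR}), and vanishes for even $n\ge 2$.

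With $B$ in hand I would compute the $\prescript{2}{}E$-page. The row of the displayed $\prescript{1}{}E$-page whose $u^0$-entry is $\HH_m(X)$ is precisely the $B$-complex truncated below at $\HH_m(X)$, so its homology coincides with that of the full $B$-complex at every column except the $u^0$-column, where it equals $\ker\bigl(B\colon\HH_m(X)\to\HH_{m+1}(X)\bigr)$ with no quotient. A short bookkeeping then gives: the row at $\HH_0(X)$ contributes $\BC^2$ in its $u^0$-column; the row at $\HH_{2k}(X)$ with $k\ge 1$ contributes $\BC$ in its $u^0$-column; the row at $\HH_{-k}(X)$ with $k\ge 1$ contributes $\BC$ in its $u^{k-1}$-column and $\BC^2$ in its $u^{k}$-column; and every other row is acyclic. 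Collecting these by total homological degree $n$, the $\prescript{2}{}E$-page has total dimension $2$ when $n\le 0$ is even, $1$ when $n\le 0$ is odd, $1$ when $n>0$ is even, and $0$ otherwise.

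This is exactly the list of dimensions of $\HN_n(X)$ obtained in Section \ref{Negative cyclic homology}. Since the spectral sequence converges to $\HN_*(X)$ and each $\prescript{\infty}{}E$-term is a subquotient of the corresponding $\prescript{2}{}E$-term, the equality of total dimensions in every homological degree forces $\prescript{2}{}E=\prescript{\infty}{}E$; hence $d_r=0$ for all $r\ge 2$, i.e.\ the spectral sequence degenerates at $\prescript{2}{}E$. (One may instead argue purely by position: from the explicit $\prescript{2}{}E$-page above, a differential $d_r$ with $r\ge 2$ --- which moves a class from the $u^j$-column to the $u^{j+r}$-column while lowering total degree by one --- always has either zero source or zero target, so this route avoids the computation of $\HN_*(X)$ altogether.) There is no substantial new obstacle here: the only input not already established in the paper is the compatibility of Connes' $B$ with the de Rham differential under HKR, and given the explicit affine-local dg model $(\bigwedge^{\bullet}L_{S/\BC},\,d^{dR})$ of Lemma \ref{beta} this is routine; everything else is the linear algebra of a convergent spectral sequence pinned down by the already-computed $\HH_*(X)$, $\HN_*(X)$, and the degenerate page \eqref{First page of HdR}.
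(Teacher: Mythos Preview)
Your proof is correct and follows essentially the same route as the paper: both identify $uB$ with the de Rham differential via HKR, compute the $\prescript{2}{}E$-page from the Hodge-to-de Rham analysis of Theorem \ref{HdR}, and then conclude degeneration. The only difference is in the final step---the paper argues purely by position on the $\prescript{2}{}E$-page (your parenthetical alternative), whereas you lead with a dimension count against the $\HN_*(X)$ already computed in Section \ref{Negative cyclic homology}; both are valid.
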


\begin{proof}
    The proof is straightforward once we apply the Hochschild–Kostant–Rosenberg (HKR) isomorphism to the terms on the first page. Recall that by the HKR isomorphism \cite{Buchweitz-Flenner},
    \[
    \HH_k(X) \simeq \prod_{q-p=k} H^p\Bigl(X,\bigwedge^q\BLb_X\Bigr).
    \]
    Moreover, the map
    \[
    uB:\prod_{q-p=k}H^p\Bigl(X,\bigwedge^q\BLb_X\Bigr)\to \prod_{q-p=k}H^p\Bigl(X,\bigwedge^{q+1}\BLb_X\Bigr)
    \]
    is induced by the de Rham differential \(d:\bigwedge^q\BLb_X\to \bigwedge^{q+1}\BLb_X\); that is, it is the direct product of the maps
    \[
    H^p\Bigl(X,\bigwedge^q\BLb_X\Bigr)\to H^p\Bigl(X,\bigwedge^{q+1}\BLb_X\Bigr).
    \]
    Thus, we can rewrite the first page as
    \[
    \begin{tikzcd}[row sep=small,column sep=small]
        & \vdots & \vdots & \vdots \\
        \cdots & 0 \ar[r] & H^1(\cO) \ar[r,"0"] & u\left(\substack{H^0(\cO)\\ \oplus \\H^1(L)}\right) \ar[r,"0"] & u^2H^0(L) \ar[r,"\simeq"] & \cdots \\[1mm]
        \cdots & 0 \ar[r] & \substack{H^0(\cO)\\ \oplus \\H^1(L)} \ar[r,"0"] & uH^0(L) \ar[r,"\simeq"] & u^2H^0(\bigwedge^2\BLb_X) \ar[r,"0"] & \cdots \\[1mm]
        \cdots & 0 \ar[r] & H^0(L) \ar[r,"\simeq"] & uH^0(\bigwedge^2\BLb) \ar[r,"0"] & u^2H^{-1}(\bigwedge^2\BLb) \ar[r,"\simeq"] & \cdots \\[1mm]
        \cdots & 0 \ar[r] & H^0(\bigwedge^2\BLb) \ar[r,"0"] & uH^{-1}(\bigwedge^2\BLb) \ar[r,"\simeq"] & u^2H^{-1}(\bigwedge^3\BLb) \ar[r,"0"] & \cdots \\
        & \vdots & \vdots & \vdots
    \end{tikzcd}
    \]
    where the differentials are determined by our study of the HdR spectral sequence. Consequently, the \(\prescript{2}{}E\)-page takes the form
    \[
    \begin{tikzcd}[row sep=small]
        & \vdots & \vdots & \vdots \\
        \cdots & 0  & H^1(\cO)\ar[rrd] & u\left(\substack{H^0(\cO)\\ \oplus \\H^1(L)}\right) & 0 & \cdots \\[1mm]
        \cdots & 0  & \substack{H^0(\cO)\\ \oplus \\H^1(L)}\ar[rrd] & 0 & 0 & \cdots \\[1mm]
        \cdots & 0  & 0 & 0 & 0 & \cdots\\[1mm]
        \cdots & 0  & H^0\Bigl(\bigwedge^2\BLb\Bigr) & 0 & 0 & \cdots \\
        & \vdots & \vdots & \vdots
    \end{tikzcd}
    \]
    It is clear from this description that the spectral sequence degenerates at the \(\prescript{2}{}E\)-page.
\end{proof}

\begin{corollary}\label{HNtoHH}
    The natural map \(\HN_n(X)\to \HH_n(X)\) is 
    \begin{itemize}
        \item an isomorphism if \(n=-1\) or if \(n\geq 0\) is even,
        \item the zero map otherwise.
    \end{itemize}
\end{corollary}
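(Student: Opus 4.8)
The strategy is to realise $\HN_n(X)\to\HH_n(X)$ as an edge homomorphism of the Hochschild-to-cyclic spectral sequence and then to read off its image from the explicit $\prescript{2}{}E=\prescript{\infty}{}E$ page displayed above. The natural map $\HN_*(X)\to\HH_*(X)$ is ``set $u=0$'': it is the projection of $\HN_*(X)$ onto the bottom graded piece of the Hodge ($u$-adic) filtration, followed by the inclusion of that piece into the $u^0$-column $\HH_*(X)$ of the spectral sequence,
\[
\HN_n(X)\surj \gr^0\HN_n(X)\inj \HH_n(X).
\]
Since no differential enters the $u^0$-column, $\gr^0\HN_n(X)$ is exactly the subspace of $\HH_n(X)$ of permanent cycles; so the map is surjective precisely when every class of $\HH_n(X)$ is a permanent cycle, and (by degeneration at $\prescript{2}{}E$) the only obstruction is the $d_1$-differential, which under HKR is the de Rham differential $d\colon\bigwedge^q\BLb_X\to\bigwedge^{q+1}\BLb_X$ on hypercohomology.

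Next I would go through the degrees. For $n\le -2$ we have $\HH_n(X)=0$, and for odd $n\ge 1$ we have $\HN_n(X)=0$, so in all these ``otherwise'' degrees the map vanishes for trivial reasons. In the remaining degrees $n=-1$, $n=0$, and even $n\ge 2$, the claim is that $d_1$ kills nothing on the $u^0$-column, so that $\HN_n(X)\to\HH_n(X)$ is onto. For $n=-1$ this is $\sigma=0$ (Lemma~\ref{sigma}). For $n=0$, writing $\HH_0(X)=H^0(\cO_X)\oplus H^1(L_X)$, the two components of $d_1$ are $\alpha=0$ (Lemma~\ref{alpha}) and a map landing in $H^1(\bigwedge^2\BLb_X)=0$. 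For even $n=2m\ge 2$ we have $\HH_n(X)=H^{1-m}(\bigwedge^{m+1}\BLb_X)$, on which $d_1$ takes values in $H^{1-m}(\bigwedge^{m+2}\BLb_X)$; this vanishes by Corollary~\ref{hypercohomology}, since $1-m\notin\{-m,-m-1\}$. (By contrast, for odd $n\ge 1$ the corresponding $d_1$ out of $\HH_n(X)$ is $\gamma$, resp.\ $\beta_{(n-1)/2}$, an isomorphism, so the $u^0$-column class dies --- as it must, since $\HN_n(X)=0$.)

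To conclude, I would invoke the previously computed dimensions: in each of the degrees $n=-1$, $n=0$, even $n\ge 2$ one has $\dim_\BC\HN_n(X)=\dim_\BC\HH_n(X)$ (both $1$, except both $2$ for $n=0$), so a surjection in these degrees is automatically an isomorphism, while the map is zero in all other degrees. The only step that is more than bookkeeping is the first paragraph --- identifying $\HN_*\to\HH_*$ with this edge map and fixing the direction of the $u$-adic filtration on $\HN_*(X)$; I would justify it through the standard presentation of the Hochschild-to-cyclic spectral sequence as the $S^1$-homotopy-fixed-point spectral sequence (compare \cite{Antieau}), after which the rest is a direct reading of the $\prescript{\infty}{}E$-page.
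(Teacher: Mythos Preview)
Your proposal is correct and follows essentially the same approach as the paper: read off the map $\HN_n(X)\to\HH_n(X)$ from the $\prescript{2}{}E=\prescript{\infty}{}E$ page of the Hochschild-to-cyclic spectral sequence. The paper's argument is terser---it simply observes that each diagonal of the $\prescript{\infty}{}E$-page has at most one nonzero graded piece and that this piece sits in the $u^0$-column exactly in the degrees where the map is claimed to be an isomorphism---whereas you spell out the case analysis (invoking $\alpha=0$, $\sigma=0$, the vanishing of $H^{1-m}(\bigwedge^{m+2}\BLb_X)$, etc.) and the dimension count explicitly, but the underlying idea is identical.
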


\begin{proof}
    This follows directly from the spectral sequence described above. Since the \(\prescript{2}{}E\)-page coincides with the \(\prescript{\infty}{}E\)-page, the \(n\)-th diagonal yields a filtration of \(\HN_n(X)\) that either is trivial or contains only one nonzero graded piece. In the latter case, we obtain \(\HN_n(X)\cong \HH_n(X)\).
\end{proof}

Similarly, we study the general nodal curve \(Y\).

\begin{theorem}
    The Hochschild-to-cyclic spectral sequence for \(Y\) degenerates at the \(\prescript{2}{}E\)-page.
\end{theorem}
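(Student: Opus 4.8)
The plan is to reproduce, almost verbatim, the argument just given for the nodal cubic curve $X$, feeding in the de Rham differentials computed in the four-step analysis of \eqref{HdR spectral seq for Y}. First I would apply the HKR isomorphism $\HH_k(Y)\simeq\prod_{q-p=k}H^p\bigl(Y,\bigwedge^q\BLb_Y\bigr)$ of \cite{Buchweitz-Flenner} to each entry $u^j\HH_*(Y)$ of the first page of the Hochschild-to-cyclic spectral sequence for $Y$, and identify the differential $d_1=uB$, summand by summand, with the de Rham differential $d\colon\bigwedge^q\BLb_Y\to\bigwedge^{q+1}\BLb_Y$ on hypercohomology, exactly as in the cubic case. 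Under this identification the restriction of $B$ to the summand $H^p\bigl(Y,\bigwedge^q\BLb_Y\bigr)\subset\HH_{q-p}(Y)$ is precisely the $d_1$-differential $H^p\bigl(Y,\bigwedge^q\BLb_Y\bigr)\to H^p\bigl(Y,\bigwedge^{q+1}\BLb_Y\bigr)$ of the Hodge-to-de Rham spectral sequence \eqref{HdR spectral seq for Y}, so the first page of the Hochschild-to-cyclic spectral sequence is assembled, up to shifts by powers of $u$, from copies of \eqref{HdR spectral seq for Y}.

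Next I would pass to $\prescript{2}{}E$ by taking $d_1$-cohomology, i.e.\ the cohomology of the complex $\bigl(\HH_\bullet(Y),B\bigr)$. The four steps establishing the degeneration of \eqref{HdR spectral seq for Y} ($\alpha=0$; $\beta_k$ an isomorphism for every $k\ge1$; $\gamma$ surjective; $\sigma=0$) show that this homology is $\BC^g$, $\BC^2$, $\ker\gamma\cong\BC^{g-n}$ in degrees $-1,0,1$ and $0$ in all other degrees: $\sigma=0$ leaves $H^1(\cO_Y)$ untouched, $\alpha=0$ leaves $H^0(\cO_Y)$ untouched, surjectivity of $\gamma$ kills everything in degrees $\ge2$ coming from $H^0\bigl(\bigwedge^2\BLb_Y\bigr)$, and the $\beta_k$ kill the remaining node-supported classes in adjacent pairs. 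Hence, on $\prescript{2}{}E$, the only nonzero entries are: in the $u^0$-column, the group $\ker\bigl(B\colon\HH_m(Y)\to\HH_{m+1}(Y)\bigr)$ in the $\HH_m$-slot; and in each column $u^j$ with $j\ge1$, only the three groups $\BC^g$, $\BC^2$, $\BC^{g-n}$, placed (times $u^j$) in the slots of $\HH_{-1}$, $\HH_0$, $\HH_1$. I would record this page in a diagram as in the cubic case; the one new feature relative to that case is that the slot built from $H^0(L_Y)$ contributes $\BC^{g-n}$ instead of $0$.

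Finally I would rule out all higher differentials. This is a homologically graded spectral sequence, so $d_r$ lowers the total degree by $1$ while raising the $u$-power by $r$; hence it carries the $\HH_k$-slot of the $u^j$-column into the $\HH_{k+2r-1}$-slot of the $u^{j+r}$-column. If a $d_r$ with $r\ge2$ starts in the $u^0$-column, its target lies in a column $u^{\ge2}$, where nonzero entries occur only in the $\HH_{-1},\HH_0,\HH_1$-slots; nonvanishing of the target would thus force $k+2r-1\in\{-1,0,1\}$, hence $k\le-2$, but $\HH_k(Y)=0$ for $k\le-2$, so the source is zero. If a $d_r$ with $r\ge2$ starts in a column $u^j$ with $j\ge1$, nonvanishing of the source forces $k\in\{-1,0,1\}$, hence $k+2r-1\ge2$ and the target is zero. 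Either way $d_r=0$, so $\prescript{2}{}E=\prescript{\infty}{}E$ and the spectral sequence degenerates at the second page. The only real work is the bookkeeping of the previous paragraph---collapsing the infinitely many node-supported terms under $d_1$ and correctly extracting $\ker\gamma$---which is precisely where the four-step analysis of \eqref{HdR spectral seq for Y} is used, and is the only place the geometry of $Y$ enters as opposed to formal manipulation.
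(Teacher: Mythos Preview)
Your proposal is correct and follows essentially the same approach as the paper: apply the HKR isomorphism to identify the first page and the differential $uB$ with the de Rham differential, use the four-step analysis of \eqref{HdR spectral seq for Y} to compute the $\prescript{2}{}E$-page, and then rule out higher differentials by a placement argument. The paper simply writes out the $\prescript{2}{}E$-page diagram and says ``by inspecting the directions of the differential maps, one easily verifies that there can be no further nonzero differentials,'' whereas you make this explicit with the degree count $d_r\colon (j,k)\mapsto(j+r,\,k+2r-1)$; your version is slightly more detailed but otherwise the arguments coincide.
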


\begin{proof}
    The proof is again straightforward after applying the HKR isomorphism. The first page may be rewritten as
    \[
    \begin{tikzcd}[row sep=small]
        & \vdots & \vdots & \vdots \\
        \cdots & 0 \ar[r] & H^1(\cO) \ar[r,"0"] & u\left(\substack{H^0(\cO)\\ \oplus \\H^1(L)}\right) \ar[r,"0"] & u^2H^0(L) \ar[r,twoheadrightarrow,"u\gamma"] & \cdots \\[1mm]
        \cdots & 0 \ar[r] & \substack{H^0(\cO)\\ \oplus \\H^1(L)} \ar[r,"0"] & uH^0(L) \ar[r,twoheadrightarrow,"u\gamma"] & u^2H^0(\bigwedge^2\BLb_X) \ar[r,"0"] & \cdots \\[1mm]
        \cdots & 0 \ar[r] & H^0(L) \ar[r,twoheadrightarrow,"u\gamma"] & uH^0(\bigwedge^2\BLb) \ar[r,"0"] & u^2H^{-1}(\bigwedge^2\BLb) \ar[r,"\simeq"] & \cdots \\[1mm]
        \cdots & 0 \ar[r] & H^0(\bigwedge^2\BLb) \ar[r,"0"] & uH^{-1}(\bigwedge^2\BLb) \ar[r,"\simeq"] & u^2H^{-1}(\bigwedge^3\BLb) \ar[r,"0"] & \cdots \\
        & \vdots & \vdots & \vdots
    \end{tikzcd}
    \]
    where the maps labeled \(u\gamma\) denote the induced maps from the de Rham differential. Hence, the \(\prescript{2}{}E\)-page is given by
    \[
    \begin{tikzcd}[row sep=small,column sep=tiny]
        && \vdots & \vdots & \vdots & \vdots & \vdots \\
        2 & \cdots & 0 & 0  & H^1(\cO)\ar[rrd] & u\left(\substack{H^0(\cO)\\ \oplus \\H^1(L)}\right) & u^2\ker(u\gamma)  & \cdots \\[1mm]
        1 & \cdots & 0  & H^1(\cO)\ar[rrd] & u\left(\substack{H^0(\cO)\\ \oplus \\H^1(L)}\right) & u^2\ker(u\gamma) & 0 & \cdots \\[1mm]
        0 & \cdots & 0  & \substack{H^0(\cO)\\ \oplus \\H^1(L)}\ar[rrd] & u\ker(u\gamma) & 0 & 0 & \cdots \\[1mm]
        -1 & \cdots & 0  & \ker(u\gamma) & 0 & 0 & 0 & \cdots \\[1mm]
        -2 & \cdots & 0  & H^0\Bigl(\bigwedge^2\BLb\Bigr) & 0 & 0 & 0 & \cdots \\
        && \vdots & \vdots & \vdots & \vdots & \vdots
    \end{tikzcd}
    \]
    By inspecting the directions of the differential maps, one easily verifies that there can be no further nonzero differentials, so the spectral sequence degenerates at the \(\prescript{2}{}E\)-page.
\end{proof}

Classifying all liftable Hochschild classes is crucial for computing categorical enumerative invariants (CEI) \cite{Caldararu-Tu_24}. Roughly speaking, CEI are invariants associated to an \(\cA_{\infty}\)-algebra \(A\) together with additional data. Given Hochschild classes in \(\HH_*(A)\) as input, a CEI computation produces complex numbers. Originally, these computations were defined only for smooth and proper \(\cA_{\infty}\)-algebras, so one cannot compute the CEI of the nodal cubic curve. However, C\u{a}ld\u{a}raru and Tu conjecture that for a nonsmooth \(\cA_{\infty}\)-algebra \(A\), such computations can still be carried out provided that both the inserted classes and the intermediate classes are liftable to \(\HN_*(A)\).

In particular, Corollary \ref{HNtoHH} implies that we may be able to compute the CEI for the nodal cubic curve using insertion classes in \(\HH_{-1}(X)\). Combined with our observation that CEIs satisfy a holomorphic anomaly equation, this reduction allows us to compute genus \(\leq 5\) CEIs for any elliptic curve by reducing the problem to computing genus \(\leq 5\) CEIs for the special nodal cubic curve, which is more approachable numerically.

\begin{remark}
    We have attempted to apply the same method to study the degenerate quintic 
    \[
    x_0x_1\cdots x_4=0
    \]
    in \(\BP^4\), which is also of interest for CEI computations. However, its Hodge-to-de Rham spectral sequence does not degenerate at the \(\prescript{2}{}E\)-page.
\end{remark}

\section{Appendix: cuspidal curve}\label{Appendix: cuspidal curve}
Our study of nodal cubic curve has a strong motivation from enumerative geometry, but we can apply the same ideas to study the projective cuspidal cubic curve $C$. The proof will be easier than the nodal curve case. We just outline some of the results we get, and sketch the proofs.

\begin{theorem}
For the cuspidal cubic curve $C$, 
\begin{enumerate}
\item its Hochschild homology is given by
\[\HH_n(C)=\begin{cases}
\BC & n=-1\\
\BC^3 & n=0\\
\BC^2 & n> 0
\end{cases} \]
\item its Hodge to de Rham spectral sequence degenerates at page $\prescript{2}{}E$;
\item its negative cyclic homology is given by 
\[\HN_n(C)=\begin{cases}
\BC^3 & n=0\\
\BC^2 & n\neq 0\text{ and even}\\
0 & {otherwise} 
\end{cases} \]
\item The natural map $\HN_n(C)\to \HH_n(C)$ is 
\begin{itemize}
\item an isomorphism, if $n\geq 0$ and even,
\item 0, otherwise.
\end{itemize}
\end{enumerate}

\end{theorem}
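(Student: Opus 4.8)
The plan is to rerun the program of Sections~\ref{Hochschild homology}--\ref{Liftable classes} for the projective cuspidal cubic $C=\{y^2z=x^3\}\subset\BP^2$. Since $C$ is a plane curve it is still a local complete intersection, so $\BLb_C=[\cO_C(-3)\to\Omega^1_{\BP^2}|_C]$ in degrees $[-1,0]$ resolves $L_C$; the only new ingredient is the local geometry of the cusp, where the defining equation is $F=x^3-y^2$, the Tjurina algebra $\BC[x,y]/(F,\partial_xF,\partial_yF)\cong\BC[x]/(x^2)$ has length $2$, and the torsion submodule of $\Omega^1_R$, for $R=\BC[x,y]/(y^2-x^3)$, is generated by $\omega=y\,dx-\tfrac23 x\,dy$ with $x^2\omega=y\omega=0$, hence is also $\cong\BC[x]/(x^2)$. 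First I would record two sheaf-level statements: $L_C$ fits into $0\to\cT\to L_C\to\pi_*\cO_{\BP^1}(-3)\to0$, where $\pi\colon\BP^1\to C$ is the normalization and $\cT$ is the torsion subsheaf, a length-$2$ skyscraper at the cusp $P$ (using $\chi(L_C)=0$, read off the resolution, to pin down the degree), so $H^0(L_C)=H^0(\cT)=\BC^2$ and $H^1(L_C)=H^1(\pi_*\cO_{\BP^1}(-3))=\BC^2$; and, exactly as in Corollary~\ref{hypercohomology}, $\bigwedge^k\BLb_C\simeq\cO_C(-3k+3)\otimes[\cO_C(-3)\to\Omega^1_{\BP^2}|_C\to\cO_C][k-2]$ for $k\ge2$, whose cohomology sheaves on an affine chart around $P$ are the Tjurina algebra and the torsion of $\Omega^1_R$, both $\cong\BC[x]/(x^2)$; therefore $\BH^i(\bigwedge^k\BLb_C)=\BC^2$ for $i\in\{-k+2,-k+1\}$ and $0$ otherwise. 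Part~(1) then follows from the HKR decomposition $\HH_n(C)\cong\bigoplus_q\BH^{q-n}(C,\bigwedge^q\BLb_C)$ (Buchweitz--Flenner, with $\bigwedge^0=\cO_C$ and $H^\bullet(\cO_C)=\BC$ in degrees $0,1$): one reads off $\HH_{-1}(C)=\BC$, $\HH_0(C)=\BC\oplus\BC^2=\BC^3$, $\HH_1(C)=H^0(L_C)=\BC^2$, and $\HH_n(C)=\BC^2$ for every $n\ge2$ from the unique $q\ge2$ that contributes.

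For part~(2) I would follow the four steps of the proof of Theorem~\ref{HdR}. Since the cusp is unibranch, $\pi$ is a homeomorphism, so $C^{an}\cong S^2$ and the Hodge-to-de Rham spectral sequence converges to $\BC$ in degrees $0,2$ and $0$ elsewhere. Step~1 ($\alpha=0$) is verbatim Lemma~\ref{alpha}. Step~2 ($\beta_k$ an isomorphism for all $k\ge1$) is local at $P$: as in Lemma~\ref{beta}, one takes the dg resolution $(S,\delta)=(\BC[x,y][\epsilon]/\epsilon^2,\ \delta\epsilon=x^3-y^2)$ of $R$, builds $(L_{S/\BC},\Delta)$ and $d^{dR}$, reduces to $k=1$ (since $\bigwedge^{k+1}L_{S/\BC}$ is a shift of $\bigwedge^2L_{S/\BC}$), and checks that $H^{-1}(\bigwedge^2L_{S/\BC})\to H^{-1}(\bigwedge^3L_{S/\BC})\cong\BC^2$ is surjective by an explicit computation with the degree $-1$ differential $\Delta^{\wedge2}_1=(-2y,\,-3x^2,\,x^3-y^2)$; surjectivity plus equality of dimensions gives the isomorphism. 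Step~3 ($\gamma$ surjective, hence an isomorphism since both sides are $\BC^2$) is the formal-local computation of Lemma~\ref{gamma}, now with the resolution $0\to T\xrightarrow{(3x^2,-2y)}T^2\to L_Z\to0$ over $Z=\Spec\BC[x,y]/(y^2-x^3)$. With Steps~1--3 in hand the $\prescript{2}{}E$-page has nonzero entries only at $(0,0)$, $(0,1)$, $(1,1)$, equal to $\BC$, $\ker\sigma$, $\coker\sigma$, and no $d_{\ge2}$ can connect them, so the spectral sequence degenerates at $\prescript{2}{}E$. Comparing total dimensions with $H^*(S^2)$ then forces $\sigma\colon H^1(\cO_C)\to H^1(L_C)$ to be \emph{injective} --- this is the single genuine departure from the nodal case (where $\sigma=0$), and the normalization argument of Lemma~\ref{sigma} does not apply here because $H^1(L_C)=\BC^2\to H^1(\pi_*\Omega^1_{\BP^1})=\BC$ is not injective.

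Part~(3) follows Section~\ref{Negative cyclic homology} via Antieau's filtration $\mathrm{gr}^n\HN_{-*}(C)\cong\rR^*\Gamma(\bF_H^n\widehat{\dR}^\bullet[2n])$. For $n\le0$ this is $H^*(C,\BC)=H^*(S^2)$ shifted by $2n$; for $n\ge1$ the truncated complex $\bF_H^n\widehat{\dR}^\bullet$ carries the spectral sequence obtained by keeping the columns $\ge n$ of the first page, and since $\gamma$ and all $\beta_k$ are isomorphisms the only survivor is the group in total degree $2$ --- $H^1(L_C)=\BC^2$ for $n=1$ and $\BH^{-n+2}(\bigwedge^n\BLb_C)=\BC^2$ for $n\ge2$ --- so $\mathrm{gr}^n\HN_{-*}(C)=\BC^2$ concentrated in degree $2-2n$. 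Assembling the table and passing to homological grading gives $\HN_n(C)=\BC^3$ ($n=0$), $\BC^2$ ($n\neq0$ even), $0$ ($n$ odd). For part~(4) I would apply HKR to the first page of the Hochschild-to-cyclic spectral sequence of Section~\ref{Liftable classes}: its $uB$-differential is the de Rham differential, with components $\alpha=0$, the injective $\sigma$, the isomorphisms $\gamma,\beta_k$, and the maps into the vanishing groups $\BH^1(\bigwedge^2\BLb_C)$, $\BH^0(\bigwedge^3\BLb_C),\dots$; inspecting the $\prescript{2}{}E$-page one sees that every surviving term sits in an \emph{even} homological degree, so no $d_{\ge2}$ is possible and the sequence degenerates at $\prescript{2}{}E$. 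Finally, $B$ vanishes on $\HH_n(C)$ exactly when $n=0$ or $n\ge2$ is even, and there $F^1\HN_n(C)=0$, so $\HN_n(C)\to\HH_n(C)$ is an isomorphism; for every other $n$ the map is zero --- because $\HN_n(C)=0$ when $n$ is odd and $\HH_n(C)=0$ when $n\le-2$ is even.

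The main obstacle, exactly as for the nodal curve, is the honest local computation behind Step~2 of part~(2): producing explicit cycles in the degree $-1$ part of $\bigwedge^2L_{S/\BC}$ whose de Rham images span $H^{-1}(\bigwedge^3L_{S/\BC})\cong\BC^2$, together with the bookkeeping showing that the Tjurina algebra of the cusp and the torsion of $\Omega^1_{\BC[x,y]/(y^2-x^3)}$ are both two-dimensional and supported at $P$. Once these local facts are in place the rest is a rerun of the nodal arguments, made somewhat shorter by the simpler equation $x^3-y^2$; the one point not to overlook is that $\sigma$ is injective rather than zero here, a fact forced by convergence (equivalently by $C^{an}\cong S^2$) rather than by the normalization comparison.
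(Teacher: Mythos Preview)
Your proposal is correct and follows the same approach as the paper --- rerun the nodal computations with the cusp's local data --- and in fact is considerably more detailed than the paper's own proof, which is a three-line sketch (``the remaining computations are similar to the computation we have done for the nodal cubic curve''). Your identification of the torsion-free quotient of $L_C$ as $\pi_*\cO_{\BP^1}(-3)$, the length-$2$ Tjurina algebra, and the resulting dimensions $h^0(L_C)=h^1(L_C)=2$ are all right, as is your observation that $\sigma$ is injective rather than zero, forced by convergence to $H^*(S^2)$ since the cusp is unibranch; the paper does not address $\sigma$ in the cuspidal case at all.
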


\begin{proof}
As before the cotangent sheaf $L_C$ also admits a resolution 
\[0\to \cO_C(-3)\to \Omega_{\BP^2}^1\vert_C \to L_C\to 0. \]
We can still compute the derived exterior powers of $\BL_C^{\bullet}$, for example
\[\bigwedge^2 \BL_C^{\bullet} = \cO_C(-3)\otimes \left(0\to\cO_C(-3)\to\Omega_{\BP^2}^1\vert_C\to \cO_C \to 0 \right)[0], \]
and this will be a local calculation since $\bigwedge^2\BL_C^{\bullet}$ supports at the singular point. Using the affine local model, we can compute the cohomology of the above chain complex $H^0(\bigwedge^2 \BL_C^{\bullet})=H^{-1}(\bigwedge^2\BL_C^{\bullet})=\BC^2$. Then the remaining computations are similar to the computation we have done for the nodal cubic curve $X$.
\end{proof}

\printbibliography

\end{document}